\newtheorem{theorem}{Theorem}[section]
\newtheorem{lemma}[theorem]{Lemma}
\newtheorem{proposition}[theorem]{Proposition}
\newtheorem{definition}[theorem]{Definition}
\newtheorem{corollary}[theorem]{Corollary}
\newtheorem{prop}{Proposition}[section]
\newtheorem{remark}[prop]{Remark}
\makeatletter \@addtoreset{equation}{section} \makeatother
\newcommand{\h}{\textbf{h}}
\newcommand{\X}{\textbf{X}}
\newcommand{\dd}{\cdot}
\begin{document}

\title[]{Index Characterization for Free Boundary Minimal Surfaces}
\author{Hung Tran}
\address{Department of Mathematics and Statistics,
 Texas Tech University, Lubbock, TX 79409}
\email{hung.tran@ttu.edu}

\renewcommand{\subjclassname}{%
  \textup{2000} Mathematics Subject Classification}
\subjclass[2000]{Primary 49Q05}
\date{}

\begin{abstract} In this paper, we compute the Morse index of a free boundary minimal submanifold from data of two simpler problems. The first is the fixed boundary problem and the second is concered with the Dirichlet-to-Neumann map associated with the Jacobi operator. As an application, we show that the Morse index of a free boundary minimal annulus is equal to 4 if and only if it is the critical catenoid.   

\end{abstract}
\maketitle
\tableofcontents

\section{Introduction}
The goal of this paper is to study the Morse index of a free boundary minimal submanifold (FBMS), particularly of codimension one. Given an orientable manifold $\Omega^n$ with boundary $\partial \Omega$, a FBMS is a critical point of the volume functional among all submanifolds with boundaries in $\partial \Omega$. 
As a consequence, a properly immersed $\Sigma\subset \Omega$ is a FBMS if and only if its mean curvature vanishes and $\partial M$ meets $\partial \Omega$ perpendicularly. The simplest example is an equatorial plane in the unit Euclidean ball. Another simple known example is the critical catenoid with rotational symmetry (see Section \ref{fbma}).  

Due to that intriguingly geometrical combination of minimality and boundary orthogonality, the subject has attracted widespread interest which can be traced back to \cite{gergonne1816, schwarz1890}. Several existence results were obtained by different methods including various minimax constructions, varifold theory, minimizing the Dirichlet functional and maximizing the first Steklov eigenvalue; see \cite{courant40, MY80, smyth84, Struwe84, Ye_free91, GJ86, Jost91, Fraser00, Li_free15, MNS16, FS16, FGM16} and references therein. There has also been extensive research aimed at understanding the boundary regularity of FBMS (see \cite{lewy51,jager70, HN81, GJ86regularity} and the excellent survey in \cite[Chapter 2]{DHTKv2}). Roughly speaking, when $\partial \Omega$ is smooth enough, the boundary of a FBMS is as smooth as $\partial \Omega$. In particular, if $\partial \Omega$ is real analytic then $\Sigma$ is real analytic and can be continued analytically across the boundary. \\ 

Recently, the subject has gained even more popularity due to a new perspective related to extremal metrics for Steklov eigenvalues, primarily due to the work of A. Fraser and R. Schoen \cite{FS11, FS15, FS16}. As this concept is important to our approach, let us explain it.  

Steklov eigenvalues are associated with the harmonic extension of functions defined on the boundary. Specifically, given $h\in C^{\infty}(\partial \Sigma)$, we consider the problem:
\[
\Bigg\{ 
\begin{tabular}{cc}
$\Delta \hat{h}=0$ & \text{on $\Sigma$},\\
$\hat{h}=h$ & \text{on $\partial \Sigma$}.
\end{tabular}
\]
The Dirichlet-to-Neumann map associated with the Laplacian, 
\[ L_{\Delta}: C^{\infty}(\partial M) \mapsto C^{\infty}(\partial M),\]
is given by 
\begin{equation}\label{LapSteklov}
 L_{\Delta}h=\frac{\partial \hat{h}}{\partial \eta}.\end{equation}
It is well known that $\Delta$ is an elliptic self-adjoint operator and that the harmonic extension has a unique solution. As a consequence, $L_{\Delta}$ is a non-negative self-adjoint operator with discrete spectrum $0=\xi_0<\xi_1\leq \xi_2\leq ...$ tending to infinity. The elements of the spectrum are called Steklov eigenvalues. 
 
In connection with our earlier discussion, when $\Omega$ is a Euclidean ball, the coordinate functions of a FBMS are eigenfunctions with Steklov eigenvalue 1. On the hand, it is well-known that the coordinate functions of a minimal submanifold in a sphere are eigenfunctions of the Laplacian. These observations provide an analogy between two settings. 

Our main focus here is to better understand the Morse index, which intuitively gives the number of distinct admissible deformations which decrease the volume to second order. One motivation is from the analogy to minimal submanifolds in a sphere, where remarkable results have been obtained recently. For minimal surfaces in $\mathbb{S}^3$, due to J. Simons \cite{Simons68}, the index is at least 1 and equality happens only for the totally geodesic immersion. Then a non-totally geodesic minimal surface has index at least 5 and, due to F. Urbano \cite{urbano90}, the Clifford torus is the only one with that index. What is more, that index characterization plays a key role in the recent celebrated proof of the Willmore conjecture by F. Marques and A. Neves \cite{MN_minmax14}. 

For a FBMS, less is known. There are restrictions on the topology of a FBMS with low index under some curvature assumptions \cite{ros08, CFP15}. Also, we mention recent papers giving lower estimates of the index by topological data \cite{Sargent16, ACS16}. It is likely that those inequalities are not sharp.

If $\Omega$ is a Euclidean ball, then the equatorial disk has index 1 (see \cite{Fraser07} or Remark \ref{equatorindex1}). 
It is conjectured that the critical catenoid is the only FBMS in $\mathbb{B}^3$ with index 4. In this direction, Fraser and Schoen showed that, if $\Sigma^k\subset \mathbb{B}^n$ is not a plane disk, then its index is at least $n$ \cite[Theorem 3.1]{FS16}. \\

This paper takes the following approach. Inspired by the work of Fraser and Schoen \cite{FS16}, we'll reduce the analysis of the Morse index into simpler point-wise problems,  studying variations fixing the boundary and Steklov eigenvalues associated with the Jacobi operator. As an application, we partially address the conjecture above by showing that the critical catenoid is the only free boundary minimal annulus with index 4. \\   

To describe our results, let us restrict our attention to when $\Sigma^k\subset \Omega^{k+1}$ is a smooth, properly immersed, and orientable FBMS. Thus $\Sigma$ is two-sided (when $\Sigma$ is one-sided one can consider its double cover). Consequently, there exists a smooth unit normal vector field $\nu$ and we may restrict our attention to normal variations of the form $V=u\nu$ for any smooth function $u$. The second variation of volume of $\Sigma(t)$ at $\Sigma=\Sigma(0)$ is the index bilinear form (\cite{MNS16})
\begin{align*}
{S}(u,u) &=\frac{d^2}{dt^2}\text{Vol}(\Sigma(t))\mid_{t=0}\\
 &=\int_{\Sigma}|\nabla^\Sigma u|^2-(\text{Rc}^{\Omega}(\nu,\nu)+|\h|^2)u^2 d\mu+\int_{\partial \Sigma}\left\langle{\nabla^{\Omega}_\nu \nu, \eta}\right\rangle u^2 da.
\end{align*}   
Here the superscripts indicate the context of corresponding operators; $|\h|$ is the norm of the second fundamental form of $\Sigma\subset \Omega$, $\text{Rc}$ denotes the Ricci tensor and $\eta$ is the outward conormal vector along $\partial \Sigma$ and is perpendicular to $\partial \Omega$. Therefore, \[\left\langle{\nabla^{\Omega}_\nu \nu,\eta}\right\rangle=\h^{\partial \Omega}(\nu,\nu),\]
where $\h^{\partial \Omega}(.,.)$ is the second fundamental form with respect to the outward unit normal of $\partial \Omega \subset \Omega$. 
\begin{definition}
The Morse index of $\Sigma^k\subset \Omega^{k+1}$ is the maximal dimension of a subspace of $C^\infty(\Sigma)$ on which the second variation is negative definite. The nullity is the dimension of the kernel of the index form; that is, the set of all $u$ such that $S(u,v)=0$ for all $v$.  
\end{definition}
Recall the Jacobi operator 
\begin{equation}\label{jacobi}
{J}=\Delta_\Sigma +\text{Rc}(\nu,\nu)+|\h^\Sigma|^2.
\end{equation}
It is well known (\cite{ACS16}, \cite{Sargent16}, \cite{MNS16}) that the index is equal to the number of negative eigenvalues counting multiplicity for the following system, 
\[ 
\Bigg\{ 
\begin{tabular}{cc}
$(\Delta_\Sigma +\text{Rc}(\nu,\nu)+|\h^\Sigma|^2) u={J}u=-\lambda u $ & \text{on $\Sigma$},\\
$\frac{\partial u}{\partial \eta}=-\h^{\partial \Omega}(\nu,\nu)u$ & \text{on $\partial \Sigma$}.
\end{tabular}\\
\]

If we restrict to variations fixing the boundary then the boundary integral disappears. That leads to the following system: 
\begin{equation}
\label{FBP}
\begin{cases}
{J}u &=-\lambda u \text{  on  } \Sigma,\\
u & \equiv 0 \text{  on  }\partial \Sigma.
\end{cases}
\end{equation}

The number of negative eigenvalues for (\ref{FBP}) is generally smaller than the Morse index because of the boundary condition. The influence of the boundary is then analyzed by the following Dirichlet-to-Neumann map associated with the Jacobi operator.  \\

Given a function $h\in C^{\infty} (\partial \Sigma)$, consider the Jacobi extension of $h$ (See Lemma \ref{existenceJ}):
\[
\Bigg\{ 
\begin{tabular}{cc}
${J}\hat{h} =0$ & \text{ on $\Sigma$},\\
$\hat{h}=h$ & \text{on $\partial M$}.
\end{tabular}
\]
Associated is its Dirichlet-to-Neumann map (see Subsection \ref{Dtn})
\begin{equation}\label{jacobiSteklov}
 L_{{J}} h=\frac{\partial \hat{h}}{\partial \eta}.\end{equation}
It turns out that $L_{{J}}$ has a discrete spectrum tending to infinity. 

Our first result characterizes the Morse index and nullity by data from the corresponding problem with fixed boundary and the Dirichlet-to-Neumann map associated with Jacobi operator. 

\begin{theorem}\label{main1} Let $\Sigma^k\subset \Omega^{k+1}$ be a smooth, properly immersed, orientable FBMS such that $\h^{\partial \Omega}(\nu,\nu)=-c$, a constant. Then, we have:
\begin{itemize}
\item The Morse index is equal to the number of non-positive eigenvalues of the fixed boundary problem (\ref{FBP}) plus the number of eigenvalues less than c of the Dirchlet-to-Neumann map (\ref{jacobiSteklov}), counting multiplicities. 
\item The nullity is equal to the dimension of the eigenspace with eigenvalue $c$ of (\ref{jacobiSteklov}).   
\end{itemize}
\end{theorem}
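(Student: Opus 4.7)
The plan is to reduce the index computation for $S$ to two independent self--adjoint spectral problems via an $S$--orthogonal decomposition of every test function into a Dirichlet piece and a Jacobi extension. Given $u\in C^\infty(\Sigma)$, set $h:=u|_{\partial \Sigma}$, let $\hat h$ be a Jacobi extension of $h$ produced by Lemma \ref{existenceJ}, and put $v:=u-\hat h$, so that $v$ vanishes on $\partial\Sigma$. Using $J\hat h=0$, $v|_{\partial\Sigma}=0$ and Green's identity, one checks that $S(v,\hat h)=0$, hence
\[
S(u,u)\;=\;S(v,v)\;+\;S(\hat h,\hat h).
\]

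The two pieces then match the two spectral problems in the statement. Integration by parts combined with $v|_{\partial \Sigma}=0$ gives $S(v,v)=-\int_\Sigma v\,Jv\,d\mu$, which is the Rayleigh form of \eqref{FBP} on $H^1_0(\Sigma)$. Using $J\hat h=0$ and $\h^{\partial\Omega}(\nu,\nu)=-c$, a second integration by parts gives
\[
S(\hat h,\hat h)\;=\;\int_{\partial\Sigma} h\bigl(L_J - c\bigr)h\,da.
\]
Let $\{\psi_i\}$ be an $L^2(\Sigma)$--orthonormal eigenbasis for \eqref{FBP} with eigenvalues $\lambda_1\le\lambda_2\le\cdots$, and $\{\phi_j\}$ an $L^2(\partial\Sigma)$--orthonormal eigenbasis for $L_J$ with eigenvalues $\sigma_1\le\sigma_2\le\cdots$; let $\hat\phi_j$ denote the Jacobi extension of $\phi_j$. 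By the $S$--orthogonality above and diagonality of each piece, the combined family $\{\psi_i\}\cup\{\hat\phi_j\}$ jointly diagonalizes $S$ with diagonal entries $\lambda_i$ and $\sigma_j-c$. The standard max--min characterization of the Morse index and nullity then reads off both counts immediately: $S$ is negative definite on the span of those $\psi_i$ with $\lambda_i<0$ together with those $\hat\phi_j$ with $\sigma_j<c$, and null exactly on the span of those with $\lambda_i=0$ or $\sigma_j=c$, yielding the two statements.

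The principal technical obstacle is the well--definedness of the decomposition when the Dirichlet kernel $K:=\ker J|_{H^1_0(\Sigma)}$ is non--trivial. In that case Jacobi extensions exist only for boundary data satisfying a Fredholm compatibility condition $\int_{\partial\Sigma} h\,\frac{\partial\psi}{\partial\eta}\,da = 0$ for every $\psi\in K$, and are then defined only modulo $K$; this is exactly where Lemma \ref{existenceJ} and Subsection \ref{Dtn} are needed. One must verify via Green's identity that $K$ lies in the radical of $S$ (so that kernel elements are absorbed cleanly into the nullity rather than interacting with the Dirichlet--to--Neumann side), and choose a canonical extension --- for instance the representative $L^2$--orthogonal to $K$ --- so that $L_J$ becomes an honest self--adjoint operator with discrete spectrum, avoiding any double--counting between the $\lambda_i=0$ and $\sigma_j=c$ contributions. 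Once this bookkeeping is handled, the clean algebraic splitting above finishes the proof.
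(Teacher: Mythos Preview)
Your decomposition $u=v+\hat h$ and the resulting $S$--orthogonality are correct, and when the Dirichlet kernel $K=\mathcal J_0^0$ is trivial your argument goes through. The genuine gap is in your treatment of the case $K\neq 0$, and it leads to the wrong count.

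You assert that ``$K$ lies in the radical of $S$.'' This is false. For $w\in K$ and any $v\in C^\infty(\Sigma)$, Green's identity gives
\[
S(w,v)\;=\;-\int_\Sigma v\,Jw \;+\;\int_{\partial\Sigma} v\bigl(D_\eta w-\alpha w\bigr)\;=\;\int_{\partial\Sigma} v\,D_\eta w,
\]
which is nonzero whenever $v|_{\partial\Sigma}$ pairs nontrivially with $D_\eta w$; and $D_\eta w\not\equiv 0$ for $w\neq 0$ by unique continuation (this is exactly the remark after Definition~\ref{bdrydata} that $\dim D_\eta\mathcal J_0^0=\dim\mathcal J_0^0$). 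So kernel elements are \emph{not} absorbed into the nullity. Relatedly, your diagonalizing family $\{\psi_i\}\cup\{\hat\phi_j\}$ does not span: the $\psi_i$ vanish on $\partial\Sigma$ and the $\hat\phi_j$ have boundary values in $(D_\eta\mathcal J_0^0)^\perp$, so every test function with $u|_{\partial\Sigma}\in D_\eta\mathcal J_0^0\setminus\{0\}$ lies outside the span. These missing directions are precisely where the extra index comes from.

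The paper's key step, which your outline does not contain, is Lemma~\ref{PropertyKB}: for each $w\in\mathcal J_0^0$ one takes an extension $b'$ of $D_\eta w$ and observes
\[
S(b'+cw,\,b'+cw)\;=\;S(b',b')+2c\int_{\partial\Sigma}(D_\eta w)^2,
\]
which becomes negative for suitable $c$. This produces $\dim\mathcal J_0^0$ additional $S$--negative directions, orthogonal to both $\mathcal J_0^-$ and $\bigoplus_{\delta<\alpha}\hat E_\delta$, and a matching upper bound (Proposition~\ref{Morseatmost}) shows no further ones exist. That is why the theorem counts the \emph{non-positive} Dirichlet eigenvalues, not just the negative ones; your argument as written would yield only $\dim\mathcal J_0^-+\dim\bigl(\bigoplus_{\delta<c}E_\delta\bigr)$ for the index.
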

\begin{remark} It is clear that ${S}(.,.)$ is negative on eigenfunctions with negative eigenvalues of (\ref{FBP}). Next, if $\hat{h}$ is a Jacobi extension of $h$ such that $L_{J}\hat{h}=\delta h$ for $\delta<c$, then 
\[{S}(\hat{h},\hat{h})=(\delta-c)\int_{\partial \Sigma}h^2<0.\]
The crucial non-triviality in the proof is that eigenfunctions with eigenvalue 0 of (\ref{FBP}) can be modified to give negative deformations, see Lemma \ref{PropertyKB}.
\end{remark}
As an application, we give a partial index characterization of the critical catenoid. 
\begin{theorem}\label{main2}
The critical catenoid in $\mathbb{B}^3$ has Morse index 4 and nullity 2. Conversely, a free boundary minimal annulus in $\mathbb{B}^3$ with index 4 must be the critical catenoid. 
\end{theorem}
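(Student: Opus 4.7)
The plan is to apply Theorem \ref{main1} with $c=1$, which is the correct value for $\Omega=\mathbb{B}^3$ since $\h^{\partial\mathbb{B}^3}(\nu,\nu)=-1$; the index and nullity then decompose into contributions from the fixed-boundary Dirichlet problem for $J$ and the Dirichlet-to-Neumann map $L_J$.

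For the forward direction I would exploit the rotational symmetry of the critical catenoid. In conformal coordinates $(s,\theta)\in[-s_0,s_0]\times S^1$, where $s_0$ is fixed by the criticality condition $s_0\tanh s_0=1$, the Jacobi operator decouples after Fourier decomposition $u=f_k(s)e^{ik\theta}$ into ODEs
\[
f''+(2\operatorname{sech}^2 s-k^2)f=0.
\]
Closed-form Jacobi solutions are available in the low modes: $\tanh s$ and $1-s\tanh s$ for $k=0$, and $\operatorname{sech} s$ and $\sinh s+s\operatorname{sech} s$ for $k=1$. Three concrete items need to be verified. First, the function $1-s\tanh s$ satisfies $1-s_0\tanh s_0=0$, which by the criticality relation is exactly a Dirichlet zero-eigenfunction of $J$; for $k\geq 1$ the potential $k^2-2\operatorname{sech}^2 s$ is positive on $[-s_0,s_0]$, ruling out further non-positive Dirichlet eigenvalues by Sturm comparison. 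Second, a direct computation of $L_J$ in the explicit Jacobi basis yields the $k=0$ antisymmetric Steklov eigenvalue $1/\sinh^2 s_0<1$, the $k=1$ symmetric eigenvalue $-1<1$ (doubled by $\cos\theta,\sin\theta$), and the $k=1$ antisymmetric eigenvalue exactly $1$ (also doubled, corresponding to the rotations of $\mathbb{B}^3$ about the $x$- and $y$-axes). Third, for $k\geq 2$ the positivity of the potential rules out any additional sub-unit or unit eigenvalues. Applying Theorem \ref{main1} gives index $=1+3=4$ and nullity $=1+2=3$.

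For the converse, let $\Sigma\subset\mathbb{B}^3$ be a FBMS annulus of index $4$. Using the coordinate functions as test functions, a direct calculation gives
\[
S(x_i,x_j)=-\int_\Sigma|\h|^2 x_i x_j\,d\mu,
\]
because $\partial_\eta x_i=x_i$ and $\h^{\partial\mathbb{B}^3}(\nu,\nu)=-1$ make the boundary integral cancel. After a Hersch-type balancing by a conformal automorphism of $\mathbb{B}^3$, this symmetric matrix diagonalizes with strictly negative entries whenever $\Sigma$ is non-planar, producing a three-dimensional $S$-negative-definite subspace and, through Theorem \ref{main1}, three Steklov eigenvalues of $L_J$ strictly less than $1$. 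The two-boundary topology of the annulus supplies a fourth negative direction through a test function distinguishing the two boundary circles, playing the role analogous to $1-s\tanh s$ on the critical catenoid and contributing either a fourth sub-unit Steklov eigenvalue or a non-positive Dirichlet eigenvalue. The hypothesis index $=4$ then forces $d=1,s=3$ in Theorem \ref{main1} with all estimates saturated. The equality case analysis pins down the extremal Steklov directions (modulo rotation) as the modified coordinate functions and forces the Dirichlet zero-eigenfunction to be of a rigid form, which combined with the Jacobi equations on the cylinder conformal model of the annulus implies that $\Sigma$ admits a continuous $S^1$-symmetry. The classical identification of rotationally symmetric FBMS annuli in $\mathbb{B}^3$ then yields the critical catenoid.

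The main obstacle is the rigidity step in the converse. The lower bounds $s\geq 3$ (coordinate-function balancing) and the fourth-direction argument are relatively soft; the nontrivial analytic content is extracting continuous rotational symmetry from the simultaneous saturation of these inequalities. This requires a careful interplay between the three extremal coordinate-function directions, the additional negative direction provided by the two boundary components, and the cylinder conformal structure of the annulus, so that equality in each underlying integral inequality propagates to a global $S^1$-symmetry on $\Sigma$.
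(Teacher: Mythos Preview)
Your forward direction is essentially the paper's argument, but two technical points need care. First, the claim that for $k\geq 1$ the potential $k^2-2\operatorname{sech}^2 s$ is positive on $[-s_0,s_0]$ is false at $k=1$ (at $s=0$ it equals $-1$), so Sturm comparison does not directly rule out a $k=1$ Dirichlet contribution. The paper instead notes that $\zeta=c(1-s\tanh s)$ is strictly positive in the interior, hence is the first Dirichlet eigenfunction, and the first eigenvalue $0$ is simple; this handles all modes at once. Second, for $k\geq 2$ mere positivity of the potential does not immediately give that the associated $L_J$ eigenvalues exceed $1$; the paper writes the two eigenvalues explicitly and checks the inequality by an elementary numerical estimate.

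The converse is where your proposal diverges from the paper, and there is a genuine gap. The paper does \emph{not} attempt to extract $S^1$-symmetry from saturation of index inequalities. Instead it proceeds as follows: assuming index $4$, one first shows the first Steklov eigenvalue of the Laplacian equals $1$ by testing $S(\cdot,\cdot)$ on $\operatorname{span}(u,X_1,X_2,X_3,\underline{1})$, where $u$ is a putative first Steklov eigenfunction with eigenvalue $<1$; a short computation gives a $5$-dimensional $S$-negative space, contradicting index $4$. Separately, index $4$ together with Corollary~\ref{lowH1} and Theorem~\ref{structureWminus} forces $\dim\mathcal J_0^-+\dim\mathcal J_0^0\leq 1$, hence $\zeta>0$ in the interior and $\Sigma$ is a star-shaped annulus. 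The conclusion then follows by invoking the Fraser--Schoen classification \cite[Theorem~6.6]{FS16}: a free boundary minimal annulus in $\mathbb{B}^n$ with first Steklov eigenvalue $1$ is congruent to the critical catenoid.

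Your route---balancing by a conformal automorphism, producing a fourth negative direction from the two-component boundary, and then reading off rotational symmetry from equality in each estimate---is not carried out, and the rigidity step you flag as the ``main obstacle'' is exactly the content of the Fraser--Schoen theorem, whose proof is substantial and does not follow from saturation of the coarse index inequalities you set up. In particular, the three negative directions coming from $x_1,x_2,x_3$ (or from $\nu_1,\nu_2,\nu_3$ as in the paper) are available for \emph{every} non-flat FBMS in $\mathbb{B}^3$, so equality in that part carries no information specific to the annulus. Without an independent argument producing rotational symmetry, your converse is incomplete; the paper closes the gap by appealing to \cite{FS16}.
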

\begin{remark}
We learned the converse statement from Richard Schoen's lecture in 2015. Upon completion of this paper, it comes to our attention that catenoid having index 4 is independently proved by B. Devyver \cite{devyver16index} and G. Smith and D. Zhou \cite{SZ16index} by different methods. Also some results of Subsection \ref{index4} are independently 
observed by B. Devyver and A. Fraser \cite{devyver16index}.  
\end{remark}
The organization of the paper is as follows. Section \ref{notation} collects notation and preliminaries. Then, in Section \ref{indextheorem}, we give a proof of Theorem \ref{main1} and discuss a generalization and estimates in the case $\h^{\partial \Sigma}(\nu,\nu)$ is not constant. Finally, we compute the index of the critical catenoid in Section \ref{fbma}.  \\
{\bf Acknowledgments:} The author would like to thank Richard Schoen for inspiring lectures and extensive discussion. The author has also benefited greatly from conversations with Xiaodong Cao, David Wiygul, and Peter McGrath. Finally, the author is grateful to an anonymous referee for detailed and constructive suggestions. 
\section{Notation and Preliminaries}\label{notation}
This section collects notation and preliminary results.  

We adopt the following setting:
\begin{itemize}
\item $\Sigma^k$ is a smooth, orientable, properly immersed FBMS with boundary $\partial \Sigma$ in the smooth orientable manifold $\Omega^{k+1}$ with boundary $\partial \Omega$. We note that if $\Omega$ is simply connected then the orientability of $\Sigma$ is automatic.
\item When $\Omega=\mathbb{B}^{k+1}$, the Euclidean ball of radius 1, we let $X$ denote the position vector. 
\item $\eta$ denotes the outward conormal vector along the boundary. Note that if $\Sigma$ is a FBMS in $\mathbb{B}^{k+1}$ then $\eta=X$ on $\partial \Sigma$.
\item $\nu$ is a choice of normal vector to the surface such that, if $\Sigma$ is non-equatorial FBMS in $\mathbb{B}^{k+1}$, then 
$\zeta=\left\langle{X, \nu}\right\rangle$ is positive at some point. 
\item For $\Sigma^k \subset \Omega^{k+1}$, with respect to a local orthonormal frame $e_1,... e_k$ tangent $\Sigma$, the second fundamental form is defined as, 
\[\h^{\Sigma}_{ij}=\left\langle{\nabla^{\Omega}_{e_i}e_j, \nu}\right\rangle=-\left\langle{\nabla^{\Omega}_{e_i}\nu, e_j}\right\rangle.\]
 Then $|\h^{\Sigma}|$ denotes its norm and the mean curvature of $\Sigma$ is just its trace.  
\item For a fixed vector $a\in \mathbb{R}^n$, $X_a=\left\langle{X,a}\right\rangle,$ $\nu_a=\left\langle{\nu,a}\right\rangle.$
\item We'll drop the volume form when the context is clear. 
\end{itemize}

Recall the bilinear form associated with the second variation for a FBMS $\Sigma^k\subset \Omega^{k+1}$, for $h, f\in C^{\infty}(\Sigma)$,
\begin{align}
\label{2ndvarAREA}
{S}(f,h) &= \int_\Sigma \nabla f\nabla h-(\text{Rc}(\nu,\nu)+|\h^\Sigma|^2) fh+\int_{\partial \Sigma}\h^{\partial \Omega}(\nu,\nu) fh.
\end{align}

This motivates the following generalization. 

\begin{definition}
\label{generalsetup}
Given $\phi, m\in C^{\infty}(\Sigma)$ such that $\phi\geq \epsilon>0$ and a constant $\alpha$, we define:
\begin{align*}
J &=\nabla (\phi \nabla)+m;\\
Q(h,f) &=\int_\Sigma \phi \nabla f\nabla h-m fh,\\
S(h,f) &=\int_\Sigma \phi \nabla f\nabla h-mfh-\alpha \int_{\partial \Sigma}\phi fh\nonumber \\
&=Q(h,f)-\alpha\int_{\partial \Sigma}\phi fh.
\end{align*}
\end{definition}
Note that the choice of $J$ is such that,
\begin{align*}
\int_{\Sigma}hJf-fJh &=-\int_{\partial \Sigma} \phi (fD_{\eta}h-hD_{\eta}f),\\
S(u,v) &= \int_\Sigma \phi \nabla f\nabla h-mfh-\alpha \int_{\partial \Sigma}\phi fh\\
 &= -\int_\Sigma u Jv+\int_{\partial \Sigma} (D_\eta v-\alpha v) u\phi. 
\end{align*}
Consequently, we define the index and nullity associated with $S(\dd,\dd)$ as follows.
\begin{definition}
The index with respect to the bilinear form $S(\dd,\dd)$ is the maximal dimension of a subspace of $C^\infty(\Sigma)$ in which $S(\dd,\dd)$ is negative definite. The nullity is the dimension of the set of all $u$ such that $S(u,v)=0$ for all $v$. 
\end{definition}
\begin{remark}
When $S(\dd,\dd)$ is the bilinear form associated with the second variation formula, then the definition above recovers the Morse index of $\Sigma$. 
\end{remark}
\subsection{The Fixed Boundary Problem}
If we restrict to variations fixing the boundary then the boundary integral in (\ref{2ndvarAREA}) disappears. Similarly, to understand the index of $S(\dd,\dd)$, we first consider functions vanishing at the boundary. That will be relevant when we study the Dirichlet boundary value problem for Jacobi operator.\\

As $\phi\geq \epsilon >0$, it follows that $J$ is an elliptic and self-adjoint differential operator with compact resolvent. 
In particular, by spectral theory, $J$ has a discrete spectrum that goes to infinity. Each eigenfunction $u$ with eigenvalue $\lambda$ satisfies the following system,
\begin{equation}
\label{FBP1}
\begin{cases}
{J}u &=-\lambda u \text{ on } \Sigma,\\
u &\equiv 0 \text{ on } \partial \Sigma.
\end{cases}
\end{equation}

The eigenvalues can be characterized by the min-max principle: 
\begin{equation}\label{varfixedbdry}
 \lambda_k (J) =\min_{V_k\subset \mathcal{W}_0^1}\max_{u\in V_k} \frac{Q(u,u)}{\int_\Sigma u^2}, \end{equation}
where each $V_k$ is $k$-dimensional subspace of $\mathcal{W}_0^{1,2}(\Sigma)$.
\begin{definition}\label{fixed} Let $\mathcal{J}^{-}_0$($\mathcal{J}^0_0$) denote the space  of eigenfunctions with negative (zero) eigenvalues for (\ref{FBP1}). The dimension of $\mathcal{J}_0^0$ is the nullity of (\ref{FBP1}). 
\end{definition}

We note that each space above is finite dimensional. 
 As a consequence, the following will be crucial in later analysis. 

\begin{definition} \label{bdrydata}
When the nullity is positive, let $\{w_i, ~i=1,..,\dim(\mathcal{J}_0^0))\} $ be a basis of $\mathcal{J}_0^0$. We define, 
\begin{equation*}
D_\eta \mathcal{J}_0^0=\text{span}(b_i=D_\eta w_i, ~i=1,..,\dim(\mathcal{J}_0^0)).
\end{equation*}
Let $\overline{D_\eta \mathcal{J}_0^0}\subset C^\infty(\Sigma)$ be the space of all $u\in C^\infty(\Sigma)$ such that $u_{\mid\partial \Sigma}\in D_\eta \mathcal{J}_0^0$.
\end{definition}
\begin{remark}It is clear that $\dim(D_\eta \mathcal{J}_0^0)=\dim(\mathcal{J}_0^0)$.
\end{remark}
\begin{remark} For a FBMS in a Euclidean ball, $\dim(\mathcal{J}_0^0)>0$ (Subsection \ref{FBMSinball}).\end{remark}

\subsection{The Dirichlet-to-Neumann map} \label{Dtn}
Given a function $h\in C^{\infty} (\partial \Sigma)$, consider the $J$-extension (Dirichlet problem associated with operator $J$) of $h$:
\[\Bigg\{ 
\begin{tabular}{cc}
${J}\hat{h}=0 $ & \text{on $\Sigma$},\\
$\hat{h} = h$ & \text{on $\partial \Sigma$}.
\end{tabular}
\]

The following result is well known. 
\begin{lemma}\label{existenceJ}
Given $h\in C^{\infty}(\partial \Sigma)$, the $J$-extension exists and is unique up to an addition of $w\in \mathcal{J}_0^0$ if and only if, for all $b\in D_\eta \mathcal{J}_0^0$,  
\[\int_{\partial\Sigma}\phi b h=0.\]
In other words, $h\in (D_\eta\mathcal{J}_0^0)^{\perp}\subset C^{\infty}(\partial \Sigma)$ with respect to the $\phi$-weighted $L^2$ inner product over $\partial \Sigma$.
\end{lemma}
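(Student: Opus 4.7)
The plan is to reduce the inhomogeneous Dirichlet problem for $J$ to one with zero boundary data, and then invoke the Fredholm alternative for the Dirichlet realization of the self-adjoint elliptic operator $J$. Concretely, I would pick any smooth extension $H \in C^\infty(\Sigma)$ of $h$ and write $\hat{h} = H + u$, converting the problem into finding $u$ with $Ju = -JH$ on $\Sigma$ and $u \equiv 0$ on $\partial \Sigma$. Since $\phi \geq \epsilon > 0$, the Dirichlet realization of $J$ is elliptic and self-adjoint with compact resolvent (as recorded just before (\ref{FBP1})), and its kernel is precisely the finite dimensional space $\mathcal{J}_0^0$. The Fredholm alternative therefore asserts that the equation $Ju = f$ with zero Dirichlet data is solvable if and only if $f$ is $L^2$-orthogonal to $\mathcal{J}_0^0$, and elliptic regularity upgrades such a solution to $C^\infty$.

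The heart of the argument is to translate the orthogonality condition $-JH \perp \mathcal{J}_0^0$ into a condition on $h$ alone. For any $w \in \mathcal{J}_0^0$, the Green-type identity recorded right after Definition \ref{generalsetup}, together with $Jw = 0$ and $w|_{\partial \Sigma} = 0$, collapses to
\[ \int_\Sigma w \cdot JH \;=\; -\int_{\partial \Sigma} \phi \, h \, D_\eta w. \]
Hence $-JH$ is $L^2$-orthogonal to every $w \in \mathcal{J}_0^0$ if and only if $\int_{\partial \Sigma} \phi\, b\, h = 0$ for every $b \in D_\eta \mathcal{J}_0^0$, which is exactly the claimed compatibility condition. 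Uniqueness of $\hat{h}$ up to an element of $\mathcal{J}_0^0$ is then automatic, since the difference of two $J$-extensions solves the homogeneous Dirichlet problem and therefore lies in the kernel.

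The main obstacle is the functional-analytic setup --- namely, justifying that $J$ with zero Dirichlet boundary condition is self-adjoint with compact resolvent so that the Fredholm alternative takes precisely the form above. I would handle this by observing that $Q(\cdot,\cdot) + C \int_\Sigma u^2$ is a coercive symmetric bilinear form on $\mathcal{W}_0^{1,2}(\Sigma)$ for sufficiently large $C$, inverting via the Riesz representation theorem, and invoking the compactness of the embedding $\mathcal{W}_0^{1,2}(\Sigma) \hookrightarrow L^2(\Sigma)$ to obtain a compact, self-adjoint inverse. Once this framework is in place, the remainder of the argument is a single integration by parts combined with standard elliptic boundary regularity.
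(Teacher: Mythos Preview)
Your argument is correct and follows essentially the same route as the paper: reduce to the homogeneous Dirichlet problem via an arbitrary extension, invoke the Fredholm alternative for the self-adjoint Dirichlet realization of $J$, and collapse the orthogonality condition to a boundary integral via the Green-type identity using $Jw=0$ and $w|_{\partial\Sigma}=0$. The paper's proof is terser, taking the functional-analytic framework as standard and omitting the explicit reduction $\hat h = H + u$, but the substance is identical.
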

\begin{proof}
The kernel of $J$ with Dirichlet boundary data is $\mathcal{J}_0^0$. By the Fredholm alternative, the extension exists if and only if, for any extension $\overline{h}$ of $h$ and any $w\in \mathcal{J}_0^0$,
\begin{align*}
0 &=\int_\Sigma w J\overline{h}\\
&=\int_\Sigma \overline{h}J w +\int_{\partial \Sigma}\phi(w D_\eta\overline{h}-h D_\eta w)\\
&=-\int_{\partial \Sigma}\phi h D_{\eta} w.
\end{align*}
Since $D_{\eta} w\in D_\eta\mathcal{J}_0^0$ the result follows.
\end{proof} 

Even though the extension is generally not unique, it is unique up to an addition of $\mathcal{J}_0^0$. Consequently, the single-valued Dirichlet-to-Neumann operator on $(D_\eta\mathcal{J}_0^0)^{\perp}$ is defined as follows. For $h\in (D_\eta\mathcal{J}_0^0)^{\perp}$ there is a unique $J$-extension $\hat{h}$ such that $D_\eta \hat{h} \in (D_\eta\mathcal{J}_0^0)^{\perp}$. Then,
\[ L_J: (D_\eta\mathcal{J}_0^0)^{\perp} \mapsto (D_\eta\mathcal{J}_0^0)^{\perp}\]
is given by
\begin{equation}\label{singleDtN}
L_J h=D_\eta \hat{h}.
\end{equation}

 
\begin{remark} Since $J$ with Dirichlet boundary condition possibly has non-trivial kernel, it is possible to define a multi-valued Dirichlet-to-Neumann map. Due to the symmetry of the corresponding functional and compactness of the trace operator, the multi-valued operator is self-adjoint with compact resolvent and bounded below (see \cite[Thm 4.5, Prop 4.8, Thm 4.15]{AEKS14} and \cite{AM12} for details). 
In particular, the single-valued part defined as above is a self-adjoint operator with discrete spectrum. The elements in that spectrum are called $J$-Steklov eigenvalues. 
\end{remark}

The $J$-Steklov eigenvalues can be characterized variationally. Let $V_k\subset (D_\eta\mathcal{J}_0^0)^{\perp}$ denote a $k$-dimensional subspace, then
\begin{align}\label{QandS}
\delta_k (L_J) &=\min_{V_k\subset (D_\eta\mathcal{J}_0^0)^{\perp}}\max_{h\in V_k} \frac{Q(\hat{h},\hat{h})}{\int_{\partial\Sigma} \phi h^2},
\end{align}
where $\hat{h}$ is any $J$-extension of $h$. 


\begin{remark}
As a consequence, for $L_J \hat{h}=\delta h$ and $\delta<\alpha$, $S(\hat{h},\hat{h})<0$. Therefore, $J-$Steklov eigenvalues play a role in the analysis of the index. 
\end{remark}
In particular, we introduce the following notations. 
\begin{definition}\label{defineHpm} We let $E_\delta$ denote the eigenspace of $L_J$ associated with the eigenvalue $\delta$. Then,
\begin{align*}
\hat{E}_\delta &=\{\hat{h} \mid h\in E_\delta \text{ and } D_\eta \hat{h}=\delta h \}.
\end{align*}

\end{definition}  
\begin{remark} Note that elements of $\bigoplus_{\delta<\infty} \hat{E}_\delta$ that are $L^2$-orthogonal on $\partial \Sigma$ are also orthogonal with respect to the bilinear form $S(\dd, \dd)$.
\end{remark}  
\begin{remark}
Since $L_J$ is bounded below and the spectrum is discrete, $\bigoplus_{\delta<c}E_\delta$ is of finite dimension.
\end{remark}


\subsection{FBMS in a Euclidean ball}
\label{FBMSinball}
This subsection applies the abstract setting above to the concrete case of a FBMS in the unit Euclidean ball $\mathbb{B}^{k+1}$.   
\begin{align*}
\text{Rc}(\nu,\nu) &=0,\\
\h^{\partial \mathbb{B}^{k+1}}(\nu,\nu)&=-1.
\end{align*}
So, choosing $\phi\equiv 1=\alpha$, $m=|\h^\Sigma|^2$ yields:
\begin{align*}
J&=\Delta +|\h^\Sigma|^2.
\end{align*}
For simplicity, we denote $\h^\Sigma$ by $\h$. The minimality implies,
\begin{align*}
\Delta X_a &=0,\\
J \nu_a &=0.
\end{align*}
It follows that, for $\zeta=\left\langle{X, \nu}\right\rangle$, 
\[(\Delta +|A|^2)\zeta =0.
\]
Similarly, for a skew-symmetric matrix $M\in \mathfrak{so}(n)$, the Lie algebra of the rotation group $SO(n)$,  
\[(\Delta +|A|^2)\left\langle{MX,\nu}\right\rangle =0.
\]
Here, $\left\langle{MX,\nu}\right\rangle$ represents the normal speed corresponding to a rotation determined by $M$. In $\mathbb{R}^3$, using the cross product $\times$, the infinitesimal normal speed associated with a rotation around a constant vector $a$ is $\left\langle{X\times \nu, a}\right\rangle.$ 

Next, we consider behavior along the boundary. First, the perpendicular boundary condition implies $\zeta_{\mid \partial \Sigma}=0$ so $\zeta\in \mathcal{J}_0^0$. Second, for a FBMS, $X_a$ is an eigenfunction with eigenvalue 1 of (\ref{LapSteklov}) (see \cite{FS11, FS16} for more details). That is, 
\begin{align*}
D_\eta X_a &=X_a,\\
0 &=\int_{\partial \Sigma}X_a.
\end{align*}

Then, it is interesting to study boundary derivative. Towards that end, it is observed that the free boundary condition implies that, along $\partial \Sigma$, $\h$ is diagonalized by $X$ and tangential vectors to $\partial \Sigma$. As a consequence, we compute, 
\begin{align*}
D_{\eta} \nu_a &=D_X \left\langle{\nu, a}\right\rangle=-\h(X,X)X_a=-\h(\eta, \eta)X_a,\\ 
D_{\eta} \zeta &=D_X \left\langle{X, \nu}\right\rangle =-\h(\eta,\eta),\\
D_\eta \left\langle{MX,\nu}\right\rangle &=\left\langle{MX,\nu}\right\rangle.
\end{align*}
Therefore, along $\partial \Sigma$, $ \h(X,X)\in D_\eta\mathcal{J}_0^0$ and $\nu_a, \left\langle{MX,\nu}\right\rangle \in (D_\eta\mathcal{J}_0^0)^\perp$. That is, 
\[\int_{\partial \Sigma}\h(X,X)\nu_a=0=\int_{\partial \Sigma}\h(X,X)\left\langle{MX,\nu}\right\rangle.\]
Furthermore, $\left\langle{MX,\nu}\right\rangle$ is an eigenfunction of (\ref{jacobiSteklov}) with eigenvalue 1. 

The following lemma generalizes \cite[Prop. 3.1]{FS16}. The proof is analogous and provided for completeness.  
\begin{lemma}
\label{Scross} Let $\Sigma^k\subset \mathbb{B}^{k+1}$ be a properly immersed FBMS. We have, for $a, b$ constant unit vectors,
\begin{align*}
S(\nu_a, \nu_b)&=\int_{\partial \Sigma}kX_a X_b-\left\langle{a,b}\right\rangle,\\
S(\nu_a, \nu_a) &=-k\int_{\Sigma} \nu_a^2.\end{align*}
\end{lemma}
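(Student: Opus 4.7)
My approach computes $S(\nu_a,\nu_b)$ directly from the bilinear form and reduces both claims to the single identity $S(\nu_a,\nu_b)=-k\int_\Sigma \nu_a\nu_b$. All the ingredients come from the preliminary computations just before the lemma: letting $a^T$ denote the projection of the constant vector $a$ to $T\Sigma$, one has $\Delta X_a=0$ and $\nabla^\Sigma X_a=a^T$ by minimality; $J\nu_a=0$ and $\nabla^\Sigma \nu_a=-\h\cdot a^T$; on $\partial\Sigma$ one has $\eta=X$, $D_\eta X_a=X_a$, and $D_\eta\nu_a=-\h(\eta,\eta)X_a$; and $\zeta=\langle X,\nu\rangle$ is a Jacobi function with $\zeta|_{\partial\Sigma}=0$ and $D_\eta\zeta|_{\partial\Sigma}=-\h(\eta,\eta)$.

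The first stage is reduction. Integration by parts in $S(\nu_a,\nu_b)$ using $J\nu_b=0$, followed by substituting $D_\eta\nu_b=-\h(\eta,\eta)X_b$, produces the boundary representation $S(\nu_a,\nu_b)=-\int_{\partial\Sigma}\bigl(\h(\eta,\eta)X_b\nu_a+\nu_a\nu_b\bigr)$. Separately, computing $\int_\Sigma\Delta(X_aX_b)$ two ways — by the product rule yielding $2\int_\Sigma(\langle a,b\rangle-\nu_a\nu_b)$, and by the divergence theorem using $D_\eta X_a=X_a$ yielding $2\int_{\partial\Sigma}X_aX_b$ — gives $\int_{\partial\Sigma}X_aX_b+\int_\Sigma\nu_a\nu_b=\langle a,b\rangle |\Sigma|$. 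Summing the diagonal case over an orthonormal basis of $\mathbb{R}^{k+1}$ yields $|\partial\Sigma|=k|\Sigma|$, and substituting back produces the purely algebraic identity $\int_{\partial\Sigma}(kX_aX_b-\langle a,b\rangle)=-k\int_\Sigma\nu_a\nu_b$. Therefore both claims in the lemma follow from the single bilinear identity $S(\nu_a,\nu_b)=-k\int_\Sigma\nu_a\nu_b$.

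The second stage, which is the technical heart, is to show $\int_{\partial\Sigma}\bigl(\h(\eta,\eta)X_b\nu_a+\nu_a\nu_b\bigr)=k\int_\Sigma\nu_a\nu_b$. I plan to establish this through three chained integrations by parts. The divergence theorem applied to the tangential field $\nu_a\nu_b X^T$, using $\ddiv^\Sigma X^T=k$ from minimality, gives $\int_{\partial\Sigma}\nu_a\nu_b=k\int_\Sigma\nu_a\nu_b-\int_\Sigma[\nu_b\h(X^T,a^T)+\nu_a\h(X^T,b^T)]$. The divergence theorem applied to $\zeta\nu_b a^T$, which vanishes on $\partial\Sigma$ since $\zeta$ does and whose interior divergence uses $\ddiv^\Sigma a^T=0$, converts each cross term into $-\int_\Sigma\zeta\h(a^T,b^T)$. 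Finally, the product rule gives $J(X_a\nu_b)=-2\h(a^T,b^T)$, so Green's identity applied to $\zeta$ and $X_a\nu_b$ — using $J\zeta=0$, $\zeta|_{\partial\Sigma}=0$, and $D_\eta\zeta=-\h(\eta,\eta)$ — yields $\int_{\partial\Sigma}\h(\eta,\eta)X_a\nu_b=-2\int_\Sigma\zeta\h(a^T,b^T)$. The three intermediate $\int_\Sigma\zeta\h(a^T,b^T)$ contributions cancel exactly and the claim follows. The main obstacle is the sign-bookkeeping through these three computations; the structural reason they combine is that $\zeta$ is the Jacobi function generated by dilations of $\mathbb{B}^{k+1}$, so Green's identity with $\zeta$ trades boundary data weighted by $\h(\eta,\eta)$ for bulk data weighted by $\h(a^T,b^T)$, exactly matching what the tangential divergence computations produce on the volume side.
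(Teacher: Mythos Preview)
Your proof is correct, but takes a genuinely different route from the paper's. The paper works almost entirely on $\partial\Sigma$: decomposing $a=X_aX+\nu_a\nu+a^T$ along the boundary (with $a^T$ tangent to $\partial\Sigma$) and computing $\ddiv_{\partial\Sigma}(\nu_a\nu+a^T)$ two ways gives the pointwise identity $\nu_a\,\h(\eta,\eta)=-(k-1)X_a-\ddiv_{\partial\Sigma}(a^T)$; one integration by parts on the closed manifold $\partial\Sigma$ then yields the first formula directly. The second formula is obtained separately by a single divergence theorem on $\Sigma$ applied to $V=X-kX_a a$.

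Your approach instead runs through the bulk, using $\zeta$ as the bridge: you first reduce both statements to the bilinear identity $S(\nu_a,\nu_b)=-k\int_\Sigma\nu_a\nu_b$ (strictly stronger than the diagonal version stated), and then establish this by three linked integrations by parts on $\Sigma$, with $\int_\Sigma\zeta\,\h(a^T,b^T)$ as the intermediary that cancels. Your argument is longer and requires more sign-tracking, but it has the conceptual merit of exhibiting $\zeta$ (the dilation Jacobi field) as the mechanism trading boundary data weighted by $\h(\eta,\eta)$ for interior data weighted by $\h(a^T,b^T)$. The paper's argument is shorter precisely because the divergence theorem on the closed manifold $\partial\Sigma$ collapses that trade into a single step.
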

\begin{proof} We compute, for $J\nu_a=J\nu_b=0$ inside $\Sigma$,
\begin{align*}
S(\nu_a, \nu_b) &= \int_{\partial \Sigma}\nu_a D_\eta \nu_b-\nu_a\nu_b\\
&=-\int_{\partial \Sigma}\nu_a \h(X,X) X_b+\nu_a\nu_b.
\end{align*}

Along $\partial \Sigma$, let $\{e_i\}_{i=1}^{k-1}, e_k=X=\eta$ be a local orthogonal frame  and $(\cdot)^T$ the tangential component of a vector field. Then, $a=X_a X+\nu_a \nu+ a^T$ and
\begin{align*}
\text{div}_{\partial \Sigma} (\nu_a \nu+ a^T)&=\text{div}_{\partial \Sigma} (a-X_a X)\\
&=-(k-1)X_a.
\end{align*}
On the other hand,
\begin{align*}
\text{div}_{\partial \Sigma} (\nu_a \nu+ a^T)&=\text{div}_{\partial \Sigma}(a^T)+\sum_{i}\nu_a \left\langle{\nabla_{e_i} \nu, e_i}\right\rangle\\
 &=\text{div}_{\partial \Sigma}(a^T)-\sum_{i}\nu_a \h(e_i, e_i)=\text{div}_{\partial \Sigma}(a^T)+\nu_a \h(\eta, \eta). 
\end{align*}
Therefore, by applying the divergence theorem, 
\begin{align*}
\int_{\partial \Sigma}\nu_a D_\eta \nu_b &=\int_{\partial \Sigma} X_b(\text{div}_{\partial \Sigma}(a^T)+(k-1)X_a)\\
&=\int_{\partial \Sigma}(k-1)X_a X_b-\int_{\partial \Sigma}a^T b^T.
\end{align*}
Consequently, 
\begin{align*}
S(\nu_a, \nu_b) &=\int_{\partial \Sigma}\nu_a D_\eta \nu_b-\nu_a\nu_b\\
&=\int_{\partial \Sigma}(k-1)X_a X_b-\int_{\partial \Sigma}(\left\langle{a^T,b^T}\right\rangle+\nu_a\nu_b)=\int_{\partial \Sigma}kX_a X_b-\left\langle{a,b}\right\rangle.
\end{align*}

When $a=b$, we consider the vector field $V=X-kX_a a$. By divergence theorem again,
\begin{align*}
\int_\Sigma k \nu_a^2=\int_\Sigma \text{div}(V)&=\int_{\partial \Sigma} \left\langle{X,V}\right\rangle=\int_{\partial \Sigma} (1-k X_a^2)=-S(\nu_a, \nu_a).
\end{align*}
That concludes the proof. 
\end{proof}
A consequence is the following whose proof is also analogous to \cite[Prop 3.1]{FS16}. 

\begin{corollary}
\label{lowH1}
Let $\Sigma^k\subset \mathbb{B}^{k+1}$ be a properly immersed FBMS. If $\Sigma$ is not equatorial, then
\[\dim(\bigoplus_{\delta<1}E_\delta)\geq k+1.\]
\end{corollary}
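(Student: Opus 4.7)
My plan is to exhibit a $(k+1)$-dimensional subspace of admissible boundary data in $(D_\eta\mathcal{J}_0^0)^\perp$ on which the Rayleigh quotient in the variational characterization (\ref{QandS}) is strictly bounded above by $1$; the min-max principle then yields $\delta_{k+1}(L_J)<1$, which is exactly the claimed dimension count.

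The natural candidates are $\{\nu_a:a\in\mathbb{R}^{k+1}\}$. Since minimality gives $J\nu_a=0$, each $\nu_a$ is a $J$-extension of its own boundary values. Combining the identity $S(\cdot,\cdot)=Q(\cdot,\cdot)-\int_{\partial\Sigma}(\cdot)^{2}$ (valid here with $\phi=1=\alpha$) and the formula $S(\nu_a,\nu_a)=-k\int_{\Sigma}\nu_a^{2}$ from Lemma \ref{Scross}, one gets
\[
\frac{Q(\nu_a,\nu_a)}{\int_{\partial\Sigma}\nu_a^{2}}=1-k\,\frac{\int_{\Sigma}\nu_a^{2}}{\int_{\partial\Sigma}\nu_a^{2}}<1
\]
whenever $\nu_a\not\equiv 0$. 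Admissibility $\nu_a|_{\partial\Sigma}\in(D_\eta\mathcal{J}_0^0)^{\perp}$ is immediate from Green's identity applied to the Jacobi-harmonic pair $(\nu_a,w)$ for $w\in\mathcal{J}_0^0$: the identity $\int_{\Sigma}(\nu_a Jw-wJ\nu_a)=-\int_{\partial\Sigma}\nu_a\,D_\eta w$ has vanishing left side, so $\int_{\partial\Sigma}\nu_a\,D_\eta w=0$.

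The crucial step is to verify that the linear map $a\mapsto\nu_a|_{\partial\Sigma}$ from $\mathbb{R}^{k+1}$ into $C^\infty(\partial\Sigma)$ is injective when $\Sigma$ is non-equatorial. Integration by parts using $J\nu_a=0$ gives $Q(\nu_a,\nu_a)=\int_{\partial\Sigma}\nu_a\,D_\eta\nu_a$, so if $\nu_a|_{\partial\Sigma}\equiv 0$ then $Q(\nu_a,\nu_a)=0$, hence $S(\nu_a,\nu_a)=0$, and comparison with Lemma \ref{Scross} forces $\nu_a\equiv 0$ on $\Sigma$. But then the constant vector $a$ would be tangent to $\Sigma$ at every point, yielding a non-trivial parallel tangent field on the compact FBMS $\Sigma\subset\mathbb{B}^{k+1}$ (so that $\Sigma$ is locally a cylinder with axis $a$); minimality together with compactness and the free-boundary condition $\eta=X$ on $\partial\Sigma$ then force $\Sigma$ to reduce to the equatorial disk, contradicting the hypothesis.

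Once injectivity is in hand, the pointwise strict inequality $<1$ on the $(k+1)$-dimensional test space promotes to a uniform bound by compactness of its unit sphere, and (\ref{QandS}) gives $\delta_{k+1}(L_J)<1$, i.e.\ $\dim\bigoplus_{\delta<1}E_\delta\geq k+1$. The main obstacle is the injectivity step, and within it, the geometric exclusion of a nontrivial constant tangent direction on a non-equatorial FBMS in the ball — the analytic half (propagating the vanishing of $\nu_a$ from $\partial\Sigma$ to all of $\Sigma$) follows cleanly from Lemma \ref{Scross} plus Green's identity.
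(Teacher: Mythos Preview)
Your proof is correct and follows the paper's approach: use $\{\nu_a\}$ as $J$-extensions of admissible boundary data, bound the Rayleigh quotient strictly by $1$ via Lemma~\ref{Scross}, and apply the min-max characterization (\ref{QandS}). You are in fact slightly more careful than the paper in checking that the \emph{boundary} restrictions $\nu_a|_{\partial\Sigma}$ (not just the $\nu_a$ on $\Sigma$) are linearly independent---your use of $S(\nu_a,\nu_a)=-k\int_\Sigma\nu_a^2$ to propagate vanishing from $\partial\Sigma$ to all of $\Sigma$ is exactly what the min-max needs and is left implicit in the paper; both you and the paper then treat the geometric step ``$\nu_a\equiv 0\Rightarrow\Sigma$ equatorial'' as essentially known.
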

\begin{proof} Let $\{e_i\}_{i=1}^{k+1}$ be an orthonormal basis of $\mathbb{R}^{k+1}$, $\nu_i=\left\langle{\nu, e_i}\right\rangle$ and  \[V=\text{span}(\nu_1, ..., \nu_{k+1}).\] By Lemma \ref{Scross}, $S(\dd,\dd)$ is negative definite on $V$. Furthermore, for any $w\in V$, $w\mid_{\partial \Sigma} \in (D_\eta \mathcal{J}_0^0)^\perp$. 
Also $\dim(V)=k+1$ (otherwise, there is a constant vector $a$ such that $\nu_a=0$, which implies that $\Sigma$ is equatorial). Finally, we observe that
\[{Q(\nu_a,\nu_a)}=S(\nu_a, \nu_a)+||h||_{L^2(\partial \Sigma)}^2<||h||_{L^2(\partial \Sigma)}^2.\]  
The result then follows from the min-max characterization (\ref{QandS}).  
\end{proof}

We also observe a preliminary estimate for the first eigenvalue of $L_J$. 


\begin{proposition} \label{lesseq0}Let $\delta_1$ be the first eigenvalue of $L_J$. Then
\[\delta_1\leq 0,\]
and equality occurs if $\Sigma$ is flat. 
\end{proposition}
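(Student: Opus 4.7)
The plan is to exhibit a test function for the variational formula (\ref{QandS}) that yields a nonpositive Rayleigh quotient. The natural candidates are the normal components $\nu_a = \langle \nu,a\rangle$ for $a\in\R^{k+1}$, since minimality gives $J\nu_a=0$, so $\nu_a$ is its own $J$-extension. First I would verify the orthogonality condition $\nu_a|_{\partial\Sigma}\in(D_\eta \mathcal{J}_0^0)^\perp$: for any $w\in\mathcal{J}_0^0$, Green's identity combined with $J\nu_a=Jw=0$ and $w|_{\partial\Sigma}=0$ gives
\[
\int_{\partial\Sigma}\nu_a D_\eta w = \int_\Sigma(\nu_a Jw - w J\nu_a) = 0.
\]

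Using $J\nu_a=0$ and Lemma \ref{Scross} (noting that with $\alpha=\phi=1$, one has $S(\nu_a,\nu_a) = Q(\nu_a,\nu_a) - \int_{\partial\Sigma}\nu_a^2$), one computes
\[
Q(\nu_a,\nu_a) = \int_{\partial\Sigma}\nu_a^2 - k\int_\Sigma\nu_a^2.
\]
Summing over an orthonormal basis $\{e_i\}_{i=1}^{k+1}$ of $\R^{k+1}$ and using $\sum_i\nu_{e_i}^2\equiv|\nu|^2=1$ pointwise, the total equals $|\partial\Sigma|-k|\Sigma|$. This vanishes by the balance identity $|\partial\Sigma|=k|\Sigma|$, obtained by computing $\Delta_\Sigma|X|^2 = 2\sum_i|\nabla X_i|^2 = 2k$ (using minimality $\Delta_\Sigma X_i=0$) and integrating against the boundary relations $\eta=X$, $|X|=1$. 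Hence $\sum_i Q(\nu_{e_i},\nu_{e_i})=0$, and some index $i_0$ must satisfy $Q(\nu_{e_{i_0}},\nu_{e_{i_0}})\leq 0$.

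Finally, I would handle equality and an admissibility check. If $\Sigma$ is the equatorial disk then $|\h|\equiv 0$, $J=\Delta$, $\mathcal{J}_0^0=\{0\}$, and the constant function on $\partial\Sigma$ is a $J$-Steklov eigenfunction with eigenvalue $0$, giving $\delta_1=0$. Otherwise $\Sigma$ is non-equatorial, and the variational formula (\ref{QandS}) applies only if $\nu_{e_{i_0}}|_{\partial\Sigma}\not\equiv 0$. If instead $\nu_a|_{\partial\Sigma}\equiv 0$ for some $a\ne 0$, then $\nu_a\in\mathcal{J}_0^0$, so direct integration by parts gives $Q(\nu_a,\nu_a)=0$; matched against the displayed formula this forces $\nu_a\equiv 0$ on $\Sigma$, which by the argument used in Corollary \ref{lowH1} means $\Sigma$ is equatorial. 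Thus for non-equatorial $\Sigma$, every $\nu_{e_i}|_{\partial\Sigma}$ is nontrivial, and applying (\ref{QandS}) with $h=\nu_{e_{i_0}}|_{\partial\Sigma}$ yields $\delta_1\leq Q(\nu_{e_{i_0}},\nu_{e_{i_0}})/\int_{\partial\Sigma}\nu_{e_{i_0}}^2\leq 0$. The main subtlety is precisely this boundary nonvanishing check, resolved cleanly by the two-formula comparison.
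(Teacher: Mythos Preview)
Your proof is correct and follows the same strategy as the paper: use the normal components $\nu_{e_i}$ as admissible test functions for the variational characterization of $\delta_1$ and exploit the identity $\sum_i Q(\nu_{e_i},\nu_{e_i})=0$ (equivalently $\sum_i S(\nu_{e_i},\nu_{e_i})=-L(\partial\Sigma)$). The one presentational difference is that the paper sums the inequalities $S(\nu_i,\nu_i)\ge(\delta_1-1)\int_{\partial\Sigma}\nu_i^2$ over $i$ and uses $\sum_i\int_{\partial\Sigma}\nu_i^2=L(\partial\Sigma)>0$ to conclude $\delta_1\le 0$ in one stroke; this sidesteps the need to single out an index $i_0$ and hence the boundary-nonvanishing check you carry out at the end. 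Your argument for that check is fine, but summing first is shorter.
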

\begin{proof}
We compute, as in Lemma \ref{Scross},
\begin{align*}
S(\nu_i, \nu_i) &= \int_{\partial \Sigma} k X_i^2-1,\\
\sum_{i=1}^{k+1} S(\nu_i, \nu_i) &= \int_{\partial \Sigma}(k-(k+1))=-L(\partial \Sigma).
\end{align*}
Each $\nu_i$ is a Jacobi field and can be used as a test function for the variational characterization (\ref{QandS}) of $\delta_1$. Therefore, 
\begin{align*}
S(\nu_i, \nu_i) &\geq (\lambda_1-1) \int_{\partial \Sigma} \nu_i^2;\\
\sum_{i=1}^{k+1} S(\nu_i, \nu_i) &\geq (\lambda_1-1)L(\partial \Sigma).
\end{align*}

Combining these equations yields that $\lambda_1\leq 0$.   

Now if $\Sigma$ is flat, then $|\h|^2=0$. Consequently, the Jacobi operator reduces to the regular Laplacian and so the result follows. In this case, $\nu$ is a constant vector. 
\end{proof}

There is a partial result in the reverse direction. We first recall the following which was noted in the proof of Prop 8.1 of \cite{FS16}. 
\begin{lemma} \label{fixedzero}
Let $\Sigma^k\subset \mathbb{B}^{k+1}$ be a properly immersed FBMS with $\zeta$ positive everywhere inside $\Sigma$. Then the first eigenvalue for the fixed boundary problem (\ref{FBP1}) is 0. In particular, $\mathcal{J}_0^-=\emptyset$ and $\mathcal{J}_0^{0}=\text{span}(\zeta)$.
\end{lemma}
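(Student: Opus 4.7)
The strategy is to recognize $\zeta$ as a positive (interior) Dirichlet eigenfunction of $-J$ with eigenvalue $0$, and then invoke the characterization of the ground state of a self-adjoint elliptic operator as the unique (up to scale) eigenfunction of constant sign. This immediately pins down $0$ as the first eigenvalue and $\text{span}(\zeta)$ as the corresponding eigenspace, hence $\mathcal{J}_0^-=\emptyset$.

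First I would verify that $\zeta\in\mathcal{J}_0^0$. Inside $\Sigma$, the identity $(\Delta+|\h|^2)\zeta=0$ was recorded earlier in Subsection \ref{FBMSinball}, so $J\zeta=0$. Along $\partial\Sigma$, the free boundary condition gives $\eta=X$ and $\nu\perp\eta$, so $\zeta=\langle X,\nu\rangle=0$ on $\partial\Sigma$. Thus $\zeta$ satisfies (\ref{FBP1}) with $\lambda=0$, and using $\zeta$ as a test function in (\ref{varfixedbdry}) yields $\lambda_1(J)\leq Q(\zeta,\zeta)/\int\zeta^2=0$.

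Next I would rule out $\lambda_1(J)<0$. Suppose toward contradiction that $\lambda_1(J)<0$, and let $u_1$ be a corresponding eigenfunction. Standard spectral theory for self-adjoint elliptic operators with Dirichlet data on a connected domain (Courant nodal-domain theorem, or equivalently the Harnack/strong maximum principle argument applied to $(-J+C)^{-1}$ for $C$ large) implies that the first eigenfunction is simple and can be chosen strictly positive on the interior of $\Sigma$. Since $u_1$ and $\zeta$ both vanish on $\partial\Sigma$, self-adjointness gives
\begin{equation*}
0=\int_\Sigma\bigl(u_1\,J\zeta-\zeta\,Ju_1\bigr)=\lambda_1(J)\int_\Sigma u_1\,\zeta.
\end{equation*}
But $u_1>0$ and $\zeta>0$ inside $\Sigma$ by hypothesis, so $\int_\Sigma u_1\zeta>0$, forcing $\lambda_1(J)=0$, a contradiction. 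Therefore $\lambda_1(J)=0$ and $\mathcal{J}_0^-=\emptyset$.

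Finally, the equality $\mathcal{J}_0^0=\text{span}(\zeta)$ follows from the simplicity of the first Dirichlet eigenvalue: any eigenfunction with eigenvalue $\lambda_1(J)=0$ must be a scalar multiple of the positive ground state $\zeta$. The main technical point requiring care is the appeal to simplicity/positivity of the first Dirichlet eigenfunction, which presumes $\Sigma$ is connected; this is part of the standing assumption on $\Sigma$ and does not present a substantive obstacle.
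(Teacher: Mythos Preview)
Your proof is correct and follows essentially the same approach as the paper: both verify that $\zeta$ is a Dirichlet eigenfunction with eigenvalue $0$ and then invoke the standard spectral-theoretic fact that a strictly positive interior eigenfunction must be the (simple) first eigenfunction. Your write-up simply makes explicit the orthogonality/contradiction argument that the paper packages into the phrase ``by general spectral theory.''
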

\begin{proof} It is noted earlier that $\zeta$ is an eigenfunction of (\ref{FBP1}) with eigenvalue 0. By general spectral theory, since $\zeta$ is positive inside $\Sigma$ it must be a first eigenfunction and the corresponding eigenspace has dimension 1. The result then follows. 
%
\end{proof}
\begin{remark}
If $\Sigma$ is star-shaped, or equivalently a polar graph, then the assumption on $\zeta$ is satisfied. 
\end{remark}
\begin{proposition}
\label{0partialconverse}
Let $\Sigma^k\subset \mathbb{B}^{k+1}$ be a properly immersed FBMS with $\delta_1(L_J)=0$ and $\zeta$ positive everywhere inside then it must be an equatorial hyperplane. 
\end{proposition}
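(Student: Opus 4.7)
The plan is to argue by contradiction: assume $\Sigma$ is not an equatorial hyperplane and deduce a contradiction with the hypothesis that $\zeta>0$ inside $\Sigma$. The strategy is to use the coordinate Jacobi fields $\nu_i=\left\langle{\nu,e_i}\right\rangle$ as test functions and to exploit the saturation of the Rayleigh quotient that $\delta_1(L_J)=0$ forces. Specifically, I will extract from this equality that $\h(\eta,\eta)\equiv 0$ on $\partial\Sigma$, so that $\zeta$ carries vanishing Cauchy data along $\partial\Sigma$, and then invoke unique continuation for the elliptic operator $J$.

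For the first step, fix an orthonormal basis $\{e_i\}_{i=1}^{k+1}$ of $\mathbb{R}^{k+1}$. The computation in the proof of Proposition \ref{lesseq0} already gives $\sum_{i=1}^{k+1}S(\nu_i,\nu_i)=-L(\partial\Sigma)=-\sum_{i}\int_{\partial\Sigma}\nu_i^2$, where the second identity uses $|\nu|^2\equiv 1$. Each $\nu_i$ is a Jacobi field with $\nu_i|_{\partial\Sigma}\in(D_\eta\mathcal{J}_0^0)^\perp$ and is therefore admissible in the variational characterization (\ref{QandS}) of $\delta_1=0$, giving $Q(\nu_i,\nu_i)=S(\nu_i,\nu_i)+\int_{\partial\Sigma}\nu_i^2\geq 0$ for every $i$. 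Since these nonnegative quantities sum to zero, each vanishes, so every $\nu_i|_{\partial\Sigma}$ is a first $J$-Steklov eigenfunction. By Lemma \ref{fixedzero}, $\mathcal{J}_0^0=\text{span}(\zeta)$, so the canonical $J$-extension of $\nu_i|_{\partial\Sigma}$ is of the form $\nu_i+c_i\zeta$ for a unique constant $c_i$, and the eigenvalue equation $L_J(\nu_i|_{\partial\Sigma})=0$ becomes
\[
0=D_\eta(\nu_i+c_i\zeta)=-\h(\eta,\eta)(X_i+c_i) \quad\text{on } \partial\Sigma.
\]

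Finally, I would argue that $\h(\eta,\eta)\equiv 0$ on $\partial\Sigma$. At any point where $\h(\eta,\eta)\neq 0$, the identity above forces $X_i=-c_i$ simultaneously for all $i$, so $X$ is locally constant on the open set $\{\h(\eta,\eta)\neq 0\}\subset\partial\Sigma$; for $k\geq 2$ this open set must be empty, since $\partial\Sigma$ is a positive-dimensional manifold and the continuous map $X\colon\partial\Sigma\to S^k$ cannot be constant on any open subset. Hence $\h(\eta,\eta)\equiv 0$ on $\partial\Sigma$, which combined with $J\zeta=0$ on $\Sigma$, $\zeta|_{\partial\Sigma}=0$, and $D_\eta\zeta=-\h(\eta,\eta)\equiv 0$ on $\partial\Sigma$ means that $\zeta$ has vanishing Cauchy data along $\partial\Sigma$ for the second-order elliptic operator $J$. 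Aronszajn's unique continuation theorem then yields $\zeta\equiv 0$ on $\Sigma$, contradicting the hypothesis that $\zeta>0$ inside. Thus $\Sigma$ must be equatorial. The most delicate step is the middle one: passing from the equality $Q(\nu_i,\nu_i)=0$ to the pointwise identity $\h(\eta,\eta)(X_i+c_i)=0$ uses that $Q(\tilde{h},\tilde{h})$ is independent of the chosen $J$-extension $\tilde{h}$ of any $h\in(D_\eta\mathcal{J}_0^0)^\perp$ (a short integration by parts), so that equality in the min-max genuinely produces an eigenfunction of the single-valued operator $L_J$.
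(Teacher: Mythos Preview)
Your argument is correct and runs parallel to the paper's proof through the key pointwise identity $\h(\eta,\eta)(X_i+c_i)=0$ on $\partial\Sigma$: both proofs obtain this from the saturation of the Rayleigh inequality for each $\nu_i$ together with $\mathcal{J}_0^0=\operatorname{span}(\zeta)$. The only genuine divergence is in the endgame. The paper argues that if $\h(\eta,\eta)$ (equivalently $|\h|$ along $\partial\Sigma$) were nonzero almost everywhere then each $X_i$ would be constant a.e., a contradiction; hence $|\h|$ vanishes on a set of positive measure in $\partial\Sigma$, and real analyticity of the FBMS forces $|\h|\equiv 0$ on $\Sigma$, so $\Sigma$ is an equatorial plane. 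You instead push the pointwise identity to $\h(\eta,\eta)\equiv 0$ on all of $\partial\Sigma$ by the immersion argument, then feed this into the Cauchy data of $\zeta$ and invoke Aronszajn-type unique continuation to get $\zeta\equiv 0$, contradicting $\zeta>0$.

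Both routes are short and standard; yours has the mild advantage of not appealing to the real-analytic regularity of FBMS (which the paper imports from the boundary-regularity literature), while the paper's has the advantage of concluding flatness of $\Sigma$ directly rather than via contradiction. Your remark that $Q(\hat{h},\hat{h})$ is independent of the chosen $J$-extension is exactly what is needed to make the equality case in the min-max produce a genuine eigenfunction of the single-valued $L_J$; the paper leaves this implicit. One small point: your argument that the open set $\{\h(\eta,\eta)\neq 0\}$ must be empty tacitly assumes $k\geq 2$ (so that $\partial\Sigma$ is positive-dimensional), but the case $k=1$ is trivial since every free boundary geodesic in $\mathbb{B}^2$ is a diameter.
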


\begin{proof}
If $\lambda_1=0$ then equality happens in each estimate in the proof of Prop. \ref{lesseq0}. Thus, each $\nu_i$ has a decomposition 
\[\nu_i=w_i+v_i,\]
where $w_i\in \mathcal{J}_0^0$ and $v_i\in \hat{E}_0$. Since $\zeta$ is positive everywhere inside, by Lemma \ref{fixedzero}, $\mathcal{J}_0^0=\text{span}\{\zeta\}$. Consequently, $w_i=c_i\zeta$. Thus, we have the following equation along the boundary,
\begin{align*}
-\h(X,X)X_i&=D_\eta \nu_i\\
&=D_\eta(c_i\zeta+v_i)=c_i \h(X,X).
\end{align*}
If $|\h|$ is non-zero except for a set of measure zero along the boundary then each $X_i$ is a constant almost everywhere, a contradiction. Therefore, $|\h|$ vanishes on a set of positive measure. As a FBMS is real analytic, $|\h|$ must vanish everywhere and so $\Sigma$ must be an equatorial hyperplane.  
\end{proof}
\begin{corollary} Suppose $\Sigma^2\subset \mathbb{B}^{3}$ be a properly immersed FBMS of genus 0 and it has first Steklov eigenvalue 1 and first $J$-Steklov eigenvalue 0. Then $\Sigma$ must be an equatorial hyperplane.
\end{corollary}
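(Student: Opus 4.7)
The plan is to reduce the corollary to Proposition \ref{0partialconverse} by verifying its extra hypothesis that $\zeta=\langle X,\nu\rangle$ is positive everywhere in the interior of $\Sigma$; once that is in hand, Proposition \ref{0partialconverse} forces $\Sigma$ to be an equatorial hyperplane and we are done. If $\zeta\equiv 0$ then $\Sigma$ is already equatorial, so throughout the argument we may assume $\zeta\not\equiv 0$. In that case $\zeta\in\mathcal{J}_0^0$ is a nontrivial Dirichlet eigenfunction of $J$ with eigenvalue $0$, and interior positivity amounts to $0$ being the smallest Dirichlet eigenvalue with one-dimensional eigenspace.

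First I would unpack the equality case of Proposition \ref{lesseq0} using the hypothesis $\delta_1(L_J)=0$. This forces each $\nu_i$ ($i=1,2,3$) to decompose as $\nu_i=w_i+v_i$ with $w_i\in\mathcal{J}_0^0$ and $v_i\in\hat{E}_0$. Since $D_\eta v_i=0$ (as $v_i$ lies in the eigenspace with eigenvalue $0$) and $D_\eta\nu_i=-\h(X,X)X_i$ along $\partial\Sigma$, we get $-\h(X,X)X_i=D_\eta w_i\in D_\eta\mathcal{J}_0^0$ for $i=1,2,3$. Combined with $D_\eta\zeta=-\h(X,X)\in D_\eta\mathcal{J}_0^0$, this places four concrete functions $-\h(X,X)\cdot\{1,X_1,X_2,X_3\}$ inside $D_\eta\mathcal{J}_0^0$, whose dimension equals $\dim\mathcal{J}_0^0$.

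Next I would use the hypotheses of genus zero and $\sigma_1(\Sigma)=1$ to classify the topology of $\Sigma$. The coordinate functions $X_1,X_2,X_3$ being first Steklov eigenfunctions forces the multiplicity of $\sigma_1$ to be at least three. For a topological disk (genus zero with one boundary component) the Fraser--Li/Nitsche theorem asserts $\Sigma$ is the equatorial disk, which finishes this case. For higher boundary multiplicities, the standard Jammes-type multiplicity bound for Steklov eigenvalues on surfaces of genus zero, together with the Fraser--Schoen rigidity theorem for free boundary minimal annuli, forces $\Sigma$ to be the critical catenoid in the annulus case.

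Finally I would rule out the critical catenoid by a second application of Proposition \ref{0partialconverse}: direct computation on the catenoid gives $\zeta=1-t\tanh t>0$ strictly in the interior, so Proposition \ref{0partialconverse} applied to the catenoid would compel it to be equatorial whenever $\delta_1(L_J)=0$, an absurdity. Hence $\delta_1(L_J)<0$ for the critical catenoid, contradicting the hypothesis $\delta_1(L_J)=0$. The only surviving possibility is that $\Sigma$ is an equatorial hyperplane. The main obstacle is the topological classification step: one needs both the Fraser--Li/Nitsche disk uniqueness and the Fraser--Schoen annulus rigidity, and must handle (or cleanly exclude via a Steklov multiplicity bound) genus-zero configurations with three or more boundary components, which is where nontrivial external input is required.
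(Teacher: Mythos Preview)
Your plan has the right overall target---reduce to Proposition \ref{0partialconverse} by verifying that $\zeta>0$ in the interior---but the route you take to get there is both unnecessarily circuitous and genuinely incomplete. The paper's proof is two lines: \cite[Prop.~8.1]{FS16} asserts directly that a genus-zero free boundary minimal surface in $\mathbb{B}^3$ whose first Steklov eigenvalue equals $1$ is star-shaped, i.e.\ $\zeta>0$ in the interior; then Proposition \ref{0partialconverse} finishes. No topological classification, no case analysis on the number of boundary components, and no appeal to the annulus rigidity theorem are needed.

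By contrast, your argument attempts to classify $\Sigma$ topologically and then check each case separately, and you correctly identify the gap yourself: you have no mechanism to handle genus-zero configurations with three or more boundary components. Steklov multiplicity bounds on surfaces do not by themselves exclude such configurations, and no available rigidity theorem covers that case. So as written the proposal does not close. Moreover, your first step (unpacking the equality case of Proposition \ref{lesseq0}) simply reproduces the internals of Proposition \ref{0partialconverse} rather than supplying the missing hypothesis $\zeta>0$; it does not advance the argument. The fix is to replace the entire classification detour with the single citation to \cite[Prop.~8.1]{FS16}, which is precisely designed to produce star-shapedness from the two hypotheses you already have (genus zero and $\sigma_1=1$).
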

\begin{proof}
By \cite[Prop 8.1]{FS16}, if $\Sigma$ is of genus 0 and has first Steklov 1, then $\Sigma$ must be star-shaped. Hence, the statement follows from Prop. \ref{0partialconverse}. 
\end{proof}
\section{Index Theorem}
\label{indextheorem}
In this section, we 
relate the index and nullity of the free boundary problem (using the general setup as in Definition \ref{generalsetup})to the index of the fixed boundary problem (\ref{FBP1}) and dimensions of $J$-Steklov eigenspaces. Unless otherwise stated, orthogonal decomposition is with respect to $S(.,.)$.  

 Recall Definitions \ref{bdrydata}, \ref{defineHpm} and observe the following.  

\begin{lemma}
\label{perpfun}
We have the following orthogonal decomposition with respect to the bilinear form $S(.,.)$,
\begin{align*} 
C^\infty(\Sigma) &=\bigoplus_{\delta< c} \hat{E}_\delta \oplus \bigoplus_{\delta\geq c} \hat{E}_\delta \oplus \overline{D_\eta \mathcal{J}_0^0};\\
\overline{D_\eta \mathcal{J}_0^0}&= \mathcal{J}_0^-\oplus (\mathcal{J}_0^-)^\perp.
\end{align*}
\end{lemma}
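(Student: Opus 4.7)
The plan is to decompose any $u \in C^\infty(\Sigma)$ by peeling off a Jacobi extension whose trace captures the component of $u|_{\partial\Sigma}$ in $(D_\eta\mathcal{J}_0^0)^\perp$, and then checking the three promised orthogonality relations.

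\textbf{Step 1 (the first decomposition).} Given $u\in C^\infty(\Sigma)$, split the boundary trace using the $\phi$-weighted $L^2$ inner product on $\partial\Sigma$: write
\[ u|_{\partial\Sigma} = h + h', \qquad h \in (D_\eta\mathcal{J}_0^0)^\perp, \quad h' \in D_\eta\mathcal{J}_0^0. \]
Since $L_J$ is self-adjoint on $(D_\eta\mathcal{J}_0^0)^\perp$ with discrete spectrum, expand $h = \sum_\delta h_\delta$ with $h_\delta \in E_\delta$ and let $\hat{h}_\delta \in \hat{E}_\delta$ be the corresponding single-valued Jacobi extension provided by Lemma~\ref{existenceJ} and \eqref{singleDtN}. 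Set $\hat{h} = \sum_\delta \hat{h}_\delta$, separated into the pieces $\delta<c$ and $\delta\geq c$. By construction $(u-\hat{h})|_{\partial\Sigma} = h' \in D_\eta\mathcal{J}_0^0$, so $u-\hat{h} \in \overline{D_\eta\mathcal{J}_0^0}$. This gives the algebraic decomposition
\[ u \;=\; \sum_{\delta<c}\hat{h}_\delta \;+\; \sum_{\delta\geq c}\hat{h}_\delta \;+\; (u-\hat{h}). \]

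\textbf{Step 2 (orthogonality of the three summands).} For $\delta\neq\delta'$, the $L^2$-orthogonality of $h_\delta$ and $h_{\delta'}$ on $\partial\Sigma$ already translates to $S$-orthogonality of $\hat{h}_\delta$ and $\hat{h}_{\delta'}$ by the remark following Definition~\ref{defineHpm}; this handles the splitting of the Jacobi-extension part into the $\delta<c$ and $\delta\geq c$ blocks. The essential calculation is orthogonality of $\hat{E}_\delta$ to $\overline{D_\eta\mathcal{J}_0^0}$: for $\hat{h}\in\hat{E}_\delta$ and $z$ with $z|_{\partial\Sigma}\in D_\eta\mathcal{J}_0^0$, integrate by parts using $J\hat{h}=0$ in the formula for $S$ of Definition~\ref{generalsetup} to obtain
\[ S(\hat{h},z) = \int_{\partial\Sigma} \phi\, z\,(D_\eta \hat{h} - \alpha \hat{h}) = \int_{\partial\Sigma} \phi\, z\,(L_J h - \alpha h), \]
and this vanishes because $L_J h - \alpha h \in (D_\eta\mathcal{J}_0^0)^\perp$ while $z|_{\partial\Sigma}\in D_\eta\mathcal{J}_0^0$.

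\textbf{Step 3 (the second decomposition).} Inside $\overline{D_\eta\mathcal{J}_0^0}$, the subspace $\mathcal{J}_0^-$ is finite dimensional and $S(\cdot,\cdot)$ is negative definite on it because each of its basis eigenfunctions $\varphi$ satisfies $S(\varphi,\varphi)=-\lambda\int_\Sigma\varphi^2$ with $\lambda<0$ (the boundary integral in $S$ drops out since $\varphi|_{\partial\Sigma}=0$). Non-degeneracy gives a standard finite-codimension $S$-orthogonal projection, so $\overline{D_\eta\mathcal{J}_0^0}$ splits as $\mathcal{J}_0^- \oplus (\mathcal{J}_0^-)^\perp$, where the latter denotes the $S$-orthogonal complement within $\overline{D_\eta\mathcal{J}_0^0}$.

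\textbf{Main obstacle.} The substantive point is not linear algebra but the integration-by-parts identity $S(\hat{h},z)=\int_{\partial\Sigma}\phi z(L_J h-\alpha h)$ and the recognition that the pairing vanishes because of how the spaces $D_\eta\mathcal{J}_0^0$ and $(D_\eta\mathcal{J}_0^0)^\perp$ were set up in Lemma~\ref{existenceJ}; this is what makes the splitting genuinely $S$-orthogonal rather than only an algebraic direct sum. One should also note that the infinite sum $\sum_\delta \hat{h}_\delta$ should be interpreted as a convergent expansion in a Sobolev/$L^2$ sense with smooth limit by elliptic regularity of $J$, though for the index count all that matters is the finite-dimensional block $\bigoplus_{\delta<c}\hat{E}_\delta$.
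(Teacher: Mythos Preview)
Your proof is correct and follows essentially the same route as the paper: split the boundary trace into its $D_\eta\mathcal{J}_0^0$ and $(D_\eta\mathcal{J}_0^0)^\perp$ parts, subtract the single-valued $J$-extension of the latter, and verify $S$-orthogonality via the boundary integral $\int_{\partial\Sigma}\phi\, b\,(D_\eta\hat h-\alpha h)=0$. The paper's version is terser in two respects: it works with the single extension $\hat h$ of $h$ rather than expanding into eigenfunctions of $L_J$ (thereby sidestepping the convergence remark you correctly flag), and it does not spell out your Step~3, leaving the $\mathcal{J}_0^-\oplus(\mathcal{J}_0^-)^\perp$ splitting as an implicit consequence of $S$ being negative definite on the finite-dimensional space $\mathcal{J}_0^-$.
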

\begin{proof} 

For any function $u\in C^\infty(\Sigma)$, $u_{\mid \partial \Sigma}=b+h$ for $b\in D_\eta \mathcal{J}_0^0$ and $h\perp D_\eta \mathcal{J}_0^0$ with respect to $L^2(\partial \Sigma, \phi da)$. By Lemma \ref{existenceJ} and its following discussion, $h$ has an extension $\hat{h}$ such that $D_\eta \hat{h}\in (D_\eta \mathcal{J}_0^0)^\perp$. 
Then, for $w=u-\hat{h}$, $w\in \overline{D_\eta \mathcal{J}_0^0}$, $w_{\mid \partial \Sigma}=b$, and
\begin{align*}
S(w, \hat{h})&= \int_{\Sigma}-w J\hat{h}+\int_{\partial \Sigma}(D_\eta \hat{h}-\alpha \hat{h})\phi b,\\
&=\int_{\partial \Sigma}b \phi D_\eta \hat{h},\\
&=0.
\end{align*}
The last equality follows because $D_\eta \hat{h}\perp D_\eta \mathcal{J}_0^0$. 

\end{proof}
\begin{remark}
$(\mathcal{J}_0^-)^\perp\subset \overline{D_\eta \mathcal{J}_0^0}$ is an infinite dimensional vector space which includes $\mathcal{J}_0^0$ as a subspace. 
\end{remark}
We observe the following properties of $(\mathcal{J}_0^-)^\perp\subset \overline{D_\eta \mathcal{J}_0^0}$. 
\begin{lemma} 
\label{PropertyKB}
For each $u\in (\mathcal{J}_0^-)^\perp$ let $W_u=\{f\in (\mathcal{J}_0^-)^\perp, f_{\mid \partial \Sigma}=k u_{\mid \partial \Sigma}\}$. 
\begin{itemize}
\item[a.] If $u_{\mid \partial \Sigma}\equiv 0$ then $S(\dd,\dd)$ is non-negative definite on $W_u$.
\item[b.] If $u_{\mid \partial \Sigma} \not \equiv 0$ then $S(\dd,\dd)$ restricted to $W_u$ has index exactly equal to 1. 
\end{itemize} 
\end{lemma}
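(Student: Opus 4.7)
The plan is to reduce both parts to the spectral decomposition of the Dirichlet problem (\ref{FBP1}) for $J$, combined with the integration-by-parts identity $S(u,w) = \int_{\partial \Sigma} \phi\, u \, D_\eta w$ whenever $w \in \mathcal{J}_0^0$, which follows immediately from the formula $S(u,v)=-\int_\Sigma u\, Jv + \int_{\partial \Sigma}(D_\eta v - \alpha v)\, u\, \phi$ recalled just after Definition \ref{generalsetup}.

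For part (a), I take $f \in W_u$ with $f|_{\partial \Sigma} \equiv 0$; the boundary integral in $S$ vanishes, so $S(f,f) = Q(f,f)$. Let $\{u_j\}$ be the $L^2$-orthonormal basis of Dirichlet eigenfunctions of $J$ with $J u_j = -\lambda_j u_j$, and write $f = \sum_j a_j u_j$. Integration by parts, using the vanishing boundary values of both $f$ and $u_j$, gives $Q(f, u_j) = \lambda_j a_j$, so the hypothesis $f \in (\mathcal{J}_0^-)^\perp$ with respect to $S$ (which reduces to $Q$ here) forces $a_j = 0$ whenever $\lambda_j < 0$. Hence $Q(f,f) = \sum_{\lambda_j \geq 0}\lambda_j a_j^2 \geq 0$.

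For part (b), I first bound the index from above. Setting $W_0 = \{f \in W_u : f|_{\partial \Sigma} \equiv 0\}$, one checks $W_u = W_0 \oplus \R u$ since $u \notin W_0$, so $W_0$ has codimension one in $W_u$. Any two-dimensional $V \subset W_u$ must therefore intersect $W_0$ non-trivially, producing a nonzero $g$ with $S(g,g) \geq 0$ by part (a); this contradicts negative-definiteness of $S|_V$. For the lower bound, because $u \in \overline{D_\eta \mathcal{J}_0^0}$ I can write $u|_{\partial \Sigma} = D_\eta w$ for some $w \in \mathcal{J}_0^0$, and I consider the family $f_t = u + tw$, $t \in \R$. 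Each $f_t$ lies in $W_u$: the boundary trace is unchanged since $w|_{\partial \Sigma} = 0$, and $f_t \in (\mathcal{J}_0^-)^\perp$ because $w$ is $L^2$-orthogonal to every $v \in \mathcal{J}_0^-$ (distinct Dirichlet eigenvalues), which via integration by parts yields $S(w,v)=Q(w,v)=0$. Using $Jw = 0$ and $w|_{\partial \Sigma} = 0$, one computes $S(w,w) = 0$ and
\begin{equation*}
S(u, w) = \int_{\partial \Sigma} \phi \, u \, D_\eta w = \int_{\partial \Sigma} \phi\, (u|_{\partial \Sigma})^2 > 0,
\end{equation*}
so $S(f_t, f_t) = S(u,u) + 2t \int_{\partial \Sigma} \phi\, (u|_{\partial \Sigma})^2$ becomes negative for $t$ sufficiently negative, exhibiting the required one-dimensional negative direction.

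The main obstacle is part (a), where the orthogonality condition is formulated with respect to the indefinite form $S$ rather than the $L^2$ inner product; the spectral expansion above is what converts this into $L^2$-orthogonality on the negative Dirichlet modes. Once that translation is made, the upper bound in (b) is a codimension count and the lower bound in (b) reduces to the boundary identity for $J$-extensions and a choice of sign for $t$.
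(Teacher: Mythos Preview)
Your proof is correct and follows essentially the same route as the paper's. For part (a) the paper converts $S$-orthogonality to $L^2$-orthogonality against $\mathcal{J}_0^-$ and then invokes the min-max characterization (\ref{varfixedbdry}), whereas you make the same conversion and then use the explicit Dirichlet eigenfunction expansion; these are equivalent. For part (b) both the lower bound (perturbing $u$ by $tw$ with $w\in\mathcal{J}_0^0$ and $D_\eta w = u|_{\partial\Sigma}$) and the upper bound (codimension-one intersection with the zero-trace subspace, then apply (a)) match the paper's argument essentially line by line.
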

\begin{proof} For $u\in (\mathcal{J}_0^-)^\perp$, there is a unique $w\in \mathcal{J}_0^0$ such that $D_\eta w=u_{\mid \partial \Sigma}$ and for any constant $c$, $u+cw\in W_u$. 

\textbf{a.} Since $u\perp \mathcal{J}_0^-$ with respect to $S(\dd,\dd)$, for each $h\in \mathcal{J}_0^-$ and $J h=-\lambda h$, 
\begin{align*}
0=S(u,h)&=\int_{\Sigma}-uJh+\int_{\partial \Sigma} \phi u(D_{\eta} h-\alpha h)\\
&=\int_\Sigma \lambda uh.
\end{align*}
The second equality follows from $u_{\mid \partial \Sigma}\equiv 0$. Since each $\lambda\neq 0$, $u\perp \mathcal{J}_0^-$ with respect to $L^2(\Sigma)$ and, since $u_{\mid \partial \Sigma}\equiv 0$, the statement follows from the min-max characterization of eigenvalues of (\ref{FBP1}).\\

\textbf{b.} We compute,
\begin{align*}
S(u+cw,u+cw)&= S(u,u)+c^2 S(w,w)+2cS(u,w),\\
&=S(u,u)+2c\int_{\partial \Sigma}\phi u(D_\eta w),\\
&=S(u,u)+2c\int_{\partial \Sigma}\phi u^2.
\end{align*}
Since it is possible to choose $c$ to make the expression negative, the index of $S(\dd,\dd)$ restricted to $W_u$ is at least one. \\

To show that the index is exactly equal to 1, let $W'\subset W_u$ be a maximal space in which $S(.,.)$ is negative definite. If $\dim(W')\geq 2$, then there exist linearly independent functions $h, f\in W'\subset W_u$ such that 
\[h_{\mid \partial \Sigma}=k f_{\mid \partial \Sigma}=D_\eta w.\]
Therefore,
\[(h-kf)_{\mid \partial \Sigma}=w-w={0}.\] 
By part (a), $S(h-kf,h-kf)\geq 0$. Since $0\not\equiv u-kv\in W'$, this gives a contradiction to the definition of $W'$. Thus, the statement follows. 
\end{proof}

Now we are ready to characterize the index of $S(\dd,\dd)$.

\begin{theorem}\label{structureWminus}
Let $(\Sigma, \partial \Sigma)$ be a smooth compact Riemannian manifold with boundary. Given $\phi, m\in C^\infty(\Sigma)$ such that $\phi>0$ and a constant $\alpha$, $S(\dd,\dd)$ is defined as in Definition \ref{generalsetup}. Then the index of $S(.,.)$ is equal to  
\[\dim(\mathcal{J}_0^0)+\dim(\mathcal{J}_0^-)+\dim(\bigoplus_{\delta<\alpha} E_{\delta}).\]
Also any maximal space on which $S(.,.)$ is negative definite, by projection, is isomorphic to $\bigoplus_{\delta<\alpha} \hat{E}_{\delta}\oplus \mathcal{J}_0^- \oplus U$ where $U$ is constructed in the proof of Prop. \ref{Morseatleast}.
\end{theorem}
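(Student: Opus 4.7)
The plan is to combine the $S$-orthogonal decomposition from Lemma \ref{perpfun},
\[
C^\infty(\Sigma) = \bigoplus_{\delta<\alpha}\hat{E}_\delta \,\oplus\, \bigoplus_{\delta\geq \alpha}\hat{E}_\delta \,\oplus\, \mathcal{J}_0^- \,\oplus\, (\mathcal{J}_0^-)^\perp,
\]
with a separate computation of the index of $S(\cdot,\cdot)$ restricted to each factor. Since the decomposition is $S$-orthogonal, the total index is the sum of the indices on the four pieces.

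The first three summands are direct. For $\hat{h}\in \hat{E}_\delta$, using $J\hat{h}=0$ one has $S(\hat{h},\hat{h})=(\delta-\alpha)\int_{\partial\Sigma}\phi h^2$, which is negative definite on $\bigoplus_{\delta<\alpha}\hat{E}_\delta$ and non-negative on $\bigoplus_{\delta\geq\alpha}\hat{E}_\delta$, contributing $\dim(\bigoplus_{\delta<\alpha}E_\delta)$ and $0$ respectively. For $u\in\mathcal{J}_0^-$ with $Ju=-\lambda u$ and $\lambda>0$, $S(u,u)=-\lambda\int u^2<0$, contributing $\dim\mathcal{J}_0^-$.

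The heart of the proof is to show that the index of $S$ on $(\mathcal{J}_0^-)^\perp$ equals $\dim\mathcal{J}_0^0$. For the upper bound I would use the boundary restriction $\pi:(\mathcal{J}_0^-)^\perp\to D_\eta\mathcal{J}_0^0$. If $W\subset(\mathcal{J}_0^-)^\perp$ is negative definite for $S$, then $\pi|_W$ is injective: any $w\in\ker\pi$ has $w|_{\partial\Sigma}\equiv 0$, so Lemma \ref{PropertyKB}(a) applied with $u=w$ gives $S(w,w)\geq 0$, hence $w=0$ inside $W$. Thus $\dim W\leq\dim D_\eta\mathcal{J}_0^0=\dim\mathcal{J}_0^0$. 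For the matching lower bound, which is essentially the content of Prop. \ref{Morseatleast}, I would construct $U$ explicitly: pick a basis $\{w_i\}_{i=1}^d$ of $\mathcal{J}_0^0$ and choose $u_i\in(\mathcal{J}_0^-)^\perp$ with $u_i|_{\partial\Sigma}=D_\eta w_i=:b_i$. On the $2d$-dimensional subspace $V=\text{span}(u_1,\ldots,u_d,w_1,\ldots,w_d)$ the matrix of $S$ takes the block form
\[
\begin{pmatrix} A & B \\ B & 0 \end{pmatrix},\qquad A_{ij}=S(u_i,u_j),\qquad B_{ij}=\int_{\partial\Sigma}\phi b_i b_j,
\]
where $B$ is positive definite since the $b_i$ are linearly independent (unique continuation for $J$ rules out a nontrivial $w$ with $Jw=0$, $w|_{\partial\Sigma}=0$, $D_\eta w=0$). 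Parametrizing a graph $\{\sum_i a_i u_i+\sum_j(La)_j w_j\}$ makes $S$ equal to the quadratic form $a^T(A+BL+L^TB)a$; choosing $L=-\tfrac{1}{2}B^{-1}(A+tI)$ with $t>0$ reduces it to $-t|a|^2<0$, exhibiting the desired $d$-dimensional negative definite $U$.

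Summing the four contributions yields the index formula $\dim\mathcal{J}_0^0+\dim\mathcal{J}_0^-+\dim(\bigoplus_{\delta<\alpha}E_\delta)$. For the structural claim, I would observe that any maximal negative definite $W$ has the $S$-orthogonal projection onto $\bigoplus_{\delta<\alpha}\hat{E}_\delta\oplus\mathcal{J}_0^-\oplus U$ injective (by the same $\ker$-of-projection argument applied to the non-negative complement, which is $\bigoplus_{\delta\geq\alpha}\hat{E}_\delta$ together with the $S$-orthogonal complement of $U$ inside $(\mathcal{J}_0^-)^\perp$), and then a dimension count forces the projection to be an isomorphism. The main obstacle is the nontrivial input from Lemma \ref{PropertyKB}(a): elements of $(\mathcal{J}_0^-)^\perp$ with zero boundary data are not merely in $\mathcal{J}_0^0$ but can include fixed-boundary eigenfunctions with positive eigenvalues; it is the $L^2$-orthogonality to $\mathcal{J}_0^-$ that rules out negative eigenvalues and gives $S\geq 0$, and everything else in the proof is then essentially linear algebra.
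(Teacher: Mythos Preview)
Your proof is correct and follows essentially the same route as the paper, which also splits into Prop.~\ref{Morseatleast} (lower bound via an explicit $U$) and Prop.~\ref{Morseatmost} (upper bound via projection). The main organizational difference is that you invoke additivity of the index under the $S$-orthogonal decomposition of Lemma~\ref{perpfun} up front, whereas the paper proves both bounds directly against the assembled space $W=\bigoplus_{\delta<\alpha}\hat{E}_\delta\oplus\mathcal{J}_0^-\oplus U$. Your additivity step is valid (the $S$-projection onto a maximal negative subspace of each summand is well-defined since $S$ is nondegenerate there, and the kernel of the assembled projection then has $S\ge 0$), and it cleanly isolates the only nontrivial piece, the index of $S$ on $(\mathcal{J}_0^-)^\perp$.

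Your block-matrix construction of $U$ is a genuine streamlining of the paper's version: the paper first simultaneously diagonalizes the boundary $L^2$-form and $S$ on chosen extensions $b_i'$, then applies Lemma~\ref{PropertyKB}(b) one coordinate at a time to produce $u_i=b_i'+c_iw_i$ with $S(u_i,u_j)=0$; your choice $L=-\tfrac12 B^{-1}(A+tI)$ accomplishes the same thing in one stroke without the preliminary diagonalization. Conversely, for the upper bound on $(\mathcal{J}_0^-)^\perp$ your boundary-restriction map is slicker than the paper's argument. One point to make explicit: for the structural claim about a maximal $W'$ you still need that $S\ge 0$ on the $S$-orthogonal complement of $U$ inside $(\mathcal{J}_0^-)^\perp$, and that step (match boundary values by an element $u\in U$, subtract, apply Lemma~\ref{PropertyKB}(a), then use $S(v,u)=0$) is exactly the Claim in the paper's Prop.~\ref{Morseatmost}. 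Your phrase ``the same $\ker$-of-projection argument'' is pointing at this, but it is worth writing out, since it uses that your $U$ restricts \emph{onto} $D_\eta\mathcal{J}_0^0$ on the boundary (which it does: the $w_j$ vanish there and the map $a\mapsto\sum a_ib_i$ is bijective).
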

The proof is divided into Prop. \ref{Morseatleast} and Prop. \ref{Morseatmost} below.
\begin{proposition}
\label{Morseatleast}
The index of $S(.,.)$ is at least \[\dim(\mathcal{J}_0^0)+\dim(\mathcal{J}_0^-)+\dim(\bigoplus_{\delta<\alpha} E_{\delta}).\]
\end{proposition}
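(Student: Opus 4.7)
The plan is to exhibit an explicit subspace of $C^\infty(\Sigma)$ of the claimed dimension on which $S(\cdot,\cdot)$ is negative definite, built as the direct sum of three mutually $S$-orthogonal blocks: (i) $\bigoplus_{\delta<\alpha}\hat{E}_\delta$, of dimension $\dim(\bigoplus_{\delta<\alpha}E_\delta)$; (ii) $\mathcal{J}_0^-$, of dimension $\dim\mathcal{J}_0^-$; and (iii) a subspace $U\subset(\mathcal{J}_0^-)^\perp\cap\overline{D_\eta\mathcal{J}_0^0}$ of dimension $\dim\mathcal{J}_0^0$, which needs to be constructed.

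On (i), integration by parts using $J\hat{h}=0$ and $D_\eta\hat{h}=\delta h$ gives
$$S(\hat{h},\hat{h})=(\delta-\alpha)\int_{\partial\Sigma}\phi h^2<0,$$
and the eigenspaces $E_\delta$ for distinct $\delta$ are $L^2$-orthogonal on $\partial\Sigma$, hence $S$-orthogonal by the remark following Definition~\ref{defineHpm}. On (ii), the Dirichlet condition $u|_{\partial\Sigma}=0$ combined with $Ju=-\lambda u$, $\lambda<0$, gives $S(u,u)=\lambda\int u^2<0$, with distinct Dirichlet eigenspaces $L^2$-, therefore $S$-, orthogonal. By Lemma~\ref{perpfun}, the block $\bigoplus_{\delta<\alpha}\hat{E}_\delta$ is $S$-orthogonal to all of $\overline{D_\eta\mathcal{J}_0^0}$, hence to both (ii) and (iii).

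For (iii), fix a basis $\{w_i\}_{i=1}^N$ of $\mathcal{J}_0^0$ and set $b_i=D_\eta w_i$, which form a basis of $D_\eta\mathcal{J}_0^0$. For each $i$, pick any smooth extension of $b_i$ to $\Sigma$ and subtract its $S$-projection onto $\mathcal{J}_0^-$ to produce $u_i\in(\mathcal{J}_0^-)^\perp$ with $u_i|_{\partial\Sigma}=b_i$. Set $v_i(c):=u_i+cw_i$. A calculation in the spirit of Lemma~\ref{PropertyKB}(b) gives
$$S(v_i(c),v_j(c))=S(u_i,u_j)+2c\int_{\partial\Sigma}\phi\, b_i b_j=:A_{ij}+2cB_{ij},$$
using $S(w_i,w_j)=0$ and $S(w_i,u_j)=\int_{\partial\Sigma}\phi\, b_i b_j$. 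The Gram matrix $B$ is positive definite because the $b_i$ are linearly independent in $L^2(\partial\Sigma,\phi\,da)$, so for $c$ sufficiently negative, $A+2cB$ is negative definite. Take $U:=\mathrm{span}\{v_1(c),\dots,v_N(c)\}$ for one such $c$. Then $U\subset(\mathcal{J}_0^-)^\perp$ since $w_i\in\mathcal{J}_0^0\subset(\mathcal{J}_0^-)^\perp$ (eigenfunctions of the Dirichlet Jacobi operator for distinct eigenvalues are orthogonal), and $U\subset\overline{D_\eta\mathcal{J}_0^0}$ since each $v_i(c)$ has boundary value $b_i\in D_\eta\mathcal{J}_0^0$.

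The main obstacle I anticipate is the passage from Lemma~\ref{PropertyKB}(b), which only supplies one negative direction per $w_i$, to a simultaneous choice realizing a negative definite $N$-dimensional block. The saving observation is that the cross-term matrix $B$ is positive definite; consequently, taking a single large negative perturbation parameter $c$ makes $2cB$ dominate the fixed matrix $A$ uniformly, which is exactly what turns individually negative directions into a jointly negative definite subspace.
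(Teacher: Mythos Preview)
Your proof is correct and follows the same overall architecture as the paper: build $W=\bigoplus_{\delta<\alpha}\hat{E}_\delta\oplus\mathcal{J}_0^-\oplus U$ with the three blocks mutually $S$-orthogonal via Lemma~\ref{perpfun}, and verify $S$ is negative definite on each.

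The only real difference is in how you produce $U$. The paper first simultaneously diagonalizes the two symmetric forms $S(\cdot,\cdot)$ and $\int_{\partial\Sigma}\phi(\cdot)(\cdot)$ on the span of the $b_i'$ (possible since the latter is positive definite there), obtaining extensions with $\int_{\partial\Sigma}\phi\,b_i'b_j'=\delta_{ij}$ and $S(b_i',b_j')=0$ for $i\neq j$; it then chooses an individual constant $c_i$ for each $i$ via Lemma~\ref{PropertyKB}(b) and checks directly that $S(u_i,u_j)=0$ for $i\neq j$. You instead keep the original basis, write the Gram matrix identity $S(v_i(c),v_j(c))=A_{ij}+2cB_{ij}$, and use positive definiteness of $B$ to drive $A+2cB$ negative definite with a single large negative $c$. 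Your route avoids the simultaneous diagonalization step and is slightly slicker; the paper's route yields an $S$-orthogonal basis for $U$, which is marginally more convenient for the companion upper-bound argument in Proposition~\ref{Morseatmost} but not essential. Both are valid.

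One small point you leave implicit but should state: subtracting the $S$-projection onto $\mathcal{J}_0^-$ does not alter boundary values (elements of $\mathcal{J}_0^-$ vanish on $\partial\Sigma$), and the projection is well-defined because $S$ is negative definite, hence non-degenerate, on the finite-dimensional space $\mathcal{J}_0^-$.
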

\begin{proof} The idea is to construct a space of this dimension on which $S(\dd,\dd)$ is negative definite. Recall that, for any constant $c$ and $\X=(\mathcal{J}_0^-)^\perp\subset \overline{D_\eta \mathcal{J}_0^0}$, 
\[C^{\infty}(\Sigma)=\bigoplus_{\delta < c} \hat{E}_\delta \oplus \bigoplus_{\delta\geq c} \hat{E}_\delta \oplus \mathcal{J}_0^-\oplus \X,\] where the direct sum decomposition is orthogonal with respect to $S(\dd,\dd)$. 
 
From the construction (see Definitions \ref{fixed} and \ref{defineHpm} and also equation (\ref{QandS})), it is clear that $S(\dd,\dd)$ is negative definite on $\mathcal{J}_0^-$ and $\bigoplus_{\delta < \alpha} \hat{E}_\delta$.

By Lemma \ref{PropertyKB}, a function $u\in \X$ that is not identically zero gives rise to a one-dimensional subspace on which $S(.,.)$ is negative definite. Furthermore, the construction is solely dependent on the boundary value of $u$ and independent of $\mathcal{J}_0^-$ and $\bigoplus_{\delta < \alpha} \hat{E}_\delta$. As a result, we expect there to be $\dim(\mathcal{J}_0^0)$ additional independent deformations on which $S(\dd,\dd)$ is negative definite. \\

Here is the precise construction. Let $B=D_\eta \mathcal{J}_0^0$ and $N=\dim(B)=\dim(\mathcal{J}_0^0)$. For each $b_i\in B$, see (\ref{bdrydata}), let $b_i'$ be an extension such that $b_i'\in \X=(\mathcal{J}_0^-)^\perp$. 
Since both the $\phi-$weighted $L^2$ inner product and the bilinear form $S(\dd,\dd)$ are symmetric, there is a set of functions denoted by the same notation $b'_i$, $i=1,... ,N$ such that,
\begin{itemize}
\item $b'_i \in X$.
\item $\int_{\partial \Sigma}\phi b'_i b'_j=\delta_{ij}$.
\item $S(b'_i,b'_j)=0$ if $i\neq j$.
\end{itemize} 

By Lemma \ref{PropertyKB}, for each $b'_i$ we can choose $u_i=b'_i+c_i w_i\in \X$, such that 
\begin{itemize}
\item $w_i\in\mathcal{J}_0^0$,
\item $u_i=b'_i=D_\eta w_i$ on $\partial \Sigma$,
\item $S(u_i,u_i)<0$.
\end{itemize}
\textbf{Claim:} $S(.,.)$ is negative definite on $U=\text{span}(u_1,...u_{N})$.\\
\textbf{Proof of the claim:} We compute, for $i\neq j$,
\begin{align*}
S(u_i,u_j)&=S(b'_i+c_i w_i,b'_j+c_jw_j)\\
&=S(b'_i,b'_j)+c_iS(w_i, b'_j)+c_jS(w_j,b'_i)+c_ic_jS(w_i,w_j)\\
&=S(b'_i,b'_j)+c_i\int_{\partial \Sigma}\phi (D_\eta w_i) b'_j+c_j\int_{\partial \Sigma}\phi (D_\eta w_j) b'_i\\
&=S(b'_i,b'_j)+(c_i+c_j) \int_{\partial \Sigma} \phi b'_i b'_j\\
&=0.
\end{align*}
Therefore, the claim follows. \\

Since $U\subset \X$, by Lemma \ref{perpfun}, $S(.,.)$ is negative definite on $W=U\oplus \mathcal{J}_0^-\oplus \bigoplus_{\delta < \alpha} \hat{E}_\delta$. Clearly, $\dim(U)=\dim(B)=\dim(\mathcal{J}_0^0)$ so the statement follows. 
\end{proof}
\begin{proposition}
\label{Morseatmost}
The index of $S(.,.)$ is at most \[\dim(\mathcal{J}_0^0)+\dim(\mathcal{J}_0^-)+\dim(\bigoplus_{\delta<\alpha} E_{\delta}).\]
\end{proposition}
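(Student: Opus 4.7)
The plan is to combine the orthogonal decomposition of Lemma \ref{perpfun} with Lemma \ref{PropertyKB}(a) to build an injective linear map from any negative-definite subspace $W$ into a finite-dimensional target of the required size. The bookkeeping works out because the ``bad'' summand $\X = (\mathcal{J}_0^-)^\perp$ contributes to the negativity of $S$ only through its boundary trace, which lives in the finite-dimensional space $D_\eta \mathcal{J}_0^0$.

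More precisely, let $W \subset C^\infty(\Sigma)$ be any subspace on which $S(\cdot,\cdot)$ is negative definite. By Lemma \ref{perpfun}, every $u \in C^\infty(\Sigma)$ decomposes $S$-orthogonally as
\begin{equation*}
u = u_- + u_+ + u_J^- + u_X, \qquad u_\pm \in \bigoplus_{\delta \gtrless \alpha}\hat{E}_\delta,\ u_J^- \in \mathcal{J}_0^-,\ u_X \in \X.
\end{equation*}
I will define the map
\begin{equation*}
T : W \longrightarrow \Bigl(\bigoplus_{\delta<\alpha}\hat{E}_\delta\Bigr) \oplus \mathcal{J}_0^- \oplus D_\eta \mathcal{J}_0^0, \qquad T(u) = \bigl(u_-,\ u_J^-,\ u_X|_{\partial \Sigma}\bigr),
\end{equation*}
whose target has dimension exactly $\dim(\bigoplus_{\delta<\alpha}E_\delta)+\dim \mathcal{J}_0^- + \dim \mathcal{J}_0^0$, and show that $T$ is injective.

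To prove injectivity, suppose $T(u) = 0$, so that $u = u_+ + u_X$ with $u_X|_{\partial \Sigma} \equiv 0$. The $S$-orthogonality in Lemma \ref{perpfun} between $\bigoplus_{\delta\geq \alpha}\hat{E}_\delta$ and $\overline{D_\eta \mathcal{J}_0^0}$, combined with the $S$-orthogonality of distinct $\hat{E}_\delta$, gives
\begin{equation*}
S(u,u) = S(u_+, u_+) + S(u_X, u_X).
\end{equation*}
A direct computation using $J\hat{h} = 0$ shows $S(\hat h, \hat h) = (\delta-\alpha)\int_{\partial \Sigma}\phi h^2 \geq 0$ for $\hat h \in \hat{E}_\delta$ with $\delta \geq \alpha$, hence $S(u_+,u_+)\geq 0$. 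Since $u_X \in (\mathcal{J}_0^-)^\perp$ has zero boundary trace, Lemma \ref{PropertyKB}(a) applied to $W_{u_X}$ gives $S(u_X,u_X)\geq 0$. Therefore $S(u,u)\geq 0$, and since $S$ is negative definite on $W$ this forces $u = 0$.

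The main conceptual obstacle—which is exactly what Lemma \ref{PropertyKB}(a) settles—is that $\X$ is infinite dimensional and contains the nontrivial subspace $\mathcal{J}_0^0$, so one cannot simply discard the $\X$ component: we need to record enough boundary information ($u_X|_{\partial \Sigma} \in D_\eta \mathcal{J}_0^0$) to control the contribution of $\X$ to the negative cone, and part (a) of that lemma guarantees that once the boundary data is killed, no further negativity can come from $\X$. Apart from this input, the argument is a linear-algebra injectivity check against an orthogonal block decomposition.
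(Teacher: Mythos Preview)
Your proof is correct and rests on the same two ingredients as the paper---the $S$-orthogonal decomposition of Lemma~\ref{perpfun} and Lemma~\ref{PropertyKB}(a)---but the packaging is genuinely different and somewhat cleaner. The paper projects a \emph{maximal} negative-definite subspace $W'$ onto the explicit space $W=U\oplus\mathcal{J}_0^-\oplus\bigoplus_{\delta<\alpha}\hat{E}_\delta$ built in Proposition~\ref{Morseatleast}; it first uses maximality to show the projection is onto, and then, to show the kernel is trivial, splits a putative kernel element $\mu=v+h$ and handles the case $v|_{\partial\Sigma}\not\equiv 0$ by comparing $v$ against the matching element of $U$. Your argument sidesteps both the maximality hypothesis and the construction of $U$: by recording the boundary trace $u_X|_{\partial\Sigma}\in D_\eta\mathcal{J}_0^0$ as part of the map $T$, you ensure that any element of $\ker T$ already has $u_X|_{\partial\Sigma}\equiv 0$, so only part~(a) of Lemma~\ref{PropertyKB} is needed and no case analysis arises. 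The one point you leave implicit---that the decomposition of Lemma~\ref{perpfun} is linear in $u$, so that $T$ is a linear map---follows from the explicit construction in that lemma's proof (orthogonal splitting of $u|_{\partial\Sigma}$ in $L^2(\partial\Sigma,\phi\,da)$, unique $J$-extension, and $S$-projection onto the finite-dimensional $\mathcal{J}_0^-$ on which $S$ is nondegenerate).
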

\begin{proof}
Let $W=U\oplus \mathcal{J}_0^-\oplus \bigoplus_{\delta < \alpha} \hat{E}_\delta$ as in Prop. \ref{Morseatleast} and $W'$ be a maximal subspace of $C^\infty(\Sigma)$ on which $S(\dd,\dd)$ is negative definite. 

We consider the projection of $W'$ to $W$, $P=\text{Proj}^{S}_W(W')$, with respect to the bilinear symmetric form $S(.,.)$. \\

\textbf{Claim:} $P=W$. In other words, the projection is onto. 

\textbf{Proof of claim:} If not then there exists a non-trivial function $u$ such that,
\begin{itemize}
\item $u\in W$,
\item $S(u,w')=0$ for any $w'\in W'$. 
\end{itemize}
It follows immediately that $S(\dd,\dd)$ is negative definite on $W'\oplus \text{span}(u)$, a contradiction to the maximality of $W'$. Thus the claim follows. \\

Next, if $\dim(W')>\dim(W)$, then, by the dimension theorem, the kernel of the projection is nontrivial. If $\mu$ is in the kernel, then
\begin{itemize}
\item For any $w\in W$, $S(\mu,w)=0$,
\item $S(\mu,\mu)<0$. 
\end{itemize}
By Lemma \ref{perpfun}, since $\mu$ is $S$-orthogonal to $W=U\oplus \mathcal{J}_0^-\oplus \bigoplus_{\delta < \alpha} \hat{E}_\delta$, we can write $\mu=v+h$ such that:
\begin{itemize}
\item $v\in \X=(\mathcal{J}_0^-)^{\perp}\subset \overline{D_\eta \mathcal{J}_0^0}$ and $h\in \bigoplus_{\delta \geq \alpha} \hat{E}_\delta$,
\item $v$ is $S$-orthogonal to $U$. In other words, $S(u,v)=0$ for any $u\in U$.
\end{itemize}

\textbf{Claim:} $S(v,v)\geq 0$.

\textbf{Proof of the claim:}
Since $v\in \X\subset \overline{D_\eta \mathcal{J}_0^0}$, $v_{\mid \partial \Sigma}\in D_\eta \mathcal{J}_0^0$. If $v_{\mid \partial \Sigma}\equiv 0$, then the claim follows from Lemma \ref{PropertyKB}(a). Otherwise, there is $u\in U$ such that $u=v$ on $\partial \Sigma$. By the same argument as in Lemma \ref{PropertyKB},
\begin{align*}
0 &\leq S(u-v,u-v),\\
&=S(u,u)+S(v,v)-2S(u,v),\\
&=S(u,u)+S(v,v) < S(v,v).
\end{align*}
The last inequality follows because $S(u,u)<0$.
So the claim follows. \\

Furthermore, $S(h,h)\geq 0$ by the variational characterization for $\bigoplus_{\delta \geq \alpha} \hat{E}_\delta$ and (\ref{QandS}). By Lemma \ref{perpfun}, $S(h,v)=0$. Putting everything together, we have,
\[S(\mu,\mu)=S(h+v,h+v)=S(h,h)+S(v,v)\geq 0.\]

That contradicts the fact that $\mu$ is in a space on which $S(.,.)$ is negative definite. Therefore, $\dim(W')\leq \dim(W).$ 
\end{proof}

The same method as above also yields the computation of the nullity for $S(.,.)$. 
\begin{theorem}\label{nullityS} Let $(\Sigma, \partial \Sigma)$ be a smooth compact Riemannian manifold with boundary. Given $\phi, m\in C^\infty(\Sigma)$ such that $\phi>0$ and a constant $\alpha$, the nullity of $S(.,.)$ is equal to the dimension of $E_\alpha$ (the eigenspace of $L_J$ with eigenvalue $\alpha$). 
\end{theorem}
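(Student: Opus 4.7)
My plan is to run the same two-step argument as in the proof of Theorem \ref{structureWminus}, but with ``negative definite'' replaced by ``null.'' The bookkeeping is again done by the $S$-orthogonal decomposition of Lemma \ref{perpfun},
\[C^{\infty}(\Sigma)=\bigoplus_{\delta<\alpha}\hat{E}_\delta \oplus \hat{E}_\alpha \oplus \bigoplus_{\delta>\alpha}\hat{E}_\delta \oplus \mathcal{J}_0^- \oplus (\mathcal{J}_0^-)^\perp,\]
together with Lemma \ref{PropertyKB}, which controls the behavior of $S(\cdot,\cdot)$ on the $W_u$'s sitting inside $(\mathcal{J}_0^-)^\perp \subset \overline{D_\eta \mathcal{J}_0^0}$.

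For the upper bound, I would take $u$ in the null space of $S(\cdot,\cdot)$ and write $u = u_{<\alpha}+u_\alpha+u_{>\alpha}+w^-+w^\perp$ in this decomposition. Pairing $S(u,\cdot)=0$ with test functions in each summand and using $S$-orthogonality together with the sign of $S$ on each piece (negative on $\bigoplus_{\delta<\alpha}\hat{E}_\delta$ and on $\mathcal{J}_0^-$, positive on $\bigoplus_{\delta>\alpha}\hat{E}_\delta$, identically zero on $\hat{E}_\alpha$) forces $u_{<\alpha}=u_{>\alpha}=w^-=0$ by exactly the same definiteness argument as in Proposition \ref{Morseatmost}. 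The $\hat{E}_\alpha$-component is unconstrained and contributes $\dim E_\alpha$; the remaining condition reduces to $S(w^\perp,v)=0$ for every $v\in (\mathcal{J}_0^-)^\perp$, and Lemma \ref{PropertyKB}(a), applied together with the boundary-data analysis from Proposition \ref{Morseatleast}, should cut this down to a $\dim \mathcal{J}_0^0$-dimensional residual space.

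For the lower bound I would construct the null subspace explicitly. The inclusion $\hat{E}_\alpha\subseteq \ker S(\cdot,\cdot)$ is immediate: for $\hat h\in \hat{E}_\alpha$ one has $J\hat h=0$ and $D_\eta \hat h-\alpha h=0$, so the integration-by-parts form of $S$ in Definition \ref{generalsetup} vanishes against every $v$. For the extra $\dim \mathcal{J}_0^0$ null directions I would adapt the $U$-construction of Proposition \ref{Morseatleast}: starting from a basis $\{w_i\}$ of $\mathcal{J}_0^0$ with $b_i=D_\eta w_i$, pick $L^2(\partial \Sigma,\phi da)$-orthonormal, $S$-orthogonal extensions $b_i'\in (\mathcal{J}_0^-)^\perp$ and form $u_i=b_i'+c_iw_i$, with the scalars $c_i$ now tuned to place each $u_i$ in the null space rather than to achieve $S(u_i,u_i)<0$.

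The main obstacle is this tuning step. In Proposition \ref{Morseatleast} only a single strict inequality needs to be arranged, and one is free to take $c_i$ of either sign with large magnitude. In the null case, a computation as in the claim of Proposition \ref{Morseatleast} gives $S(b_i'+cw_i,b_i'+cw_i)=S(b_i',b_i')+2c\int_{\partial \Sigma}\phi b_i^2$, a linear function of $c$; the value that makes this vanish is the natural candidate $c_i$. Checking that $u_i$ genuinely lies in $\ker S(\cdot,\cdot)$ (and not merely that $S(u_i,u_i)=0$) is where Lemma \ref{PropertyKB}(b) enters: on $W_{u_i}$ the form has index exactly one, and the cross-vanishing $S(u_i,u_j)=0$ for $i\neq j$ follows from the same symmetry computation as in the index proof, while $S$-orthogonality to the remaining summands of Lemma \ref{perpfun} comes for free from that lemma. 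These together should yield an explicit $(\dim \mathcal{J}_0^0+\dim E_\alpha)$-dimensional subspace inside $\ker S(\cdot,\cdot)$, matching the upper bound.
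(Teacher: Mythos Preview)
There is a genuine gap in your lower-bound construction. You propose $u_i=b_i'+c_iw_i$ with $c_i$ chosen so that $S(u_i,u_i)=0$, and then appeal to Lemma \ref{PropertyKB}(b) to promote this to $u_i\in\ker S(\cdot,\cdot)$. But that lemma only controls $S$ restricted to the one-dimensional family $W_{u_i}$; it says nothing about $S(u_i,v)$ for general $v$. Any element of $\ker S(\cdot,\cdot)$ must satisfy $Ju=0$ in $\Sigma$ and $D_\eta u=\alpha u$ on $\partial\Sigma$ (integrate by parts against compactly supported $v$, then against arbitrary $v$). Your $u_i$ has boundary value $b_i\in D_\eta\mathcal{J}_0^0$, and Lemma \ref{existenceJ} says a Jacobi extension of such data exists only when $b_i\perp D_\eta\mathcal{J}_0^0$, so that $b_i=0$ and hence, by the remark after Definition \ref{bdrydata}, $w_i=0$. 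Thus no nonzero $u_i$ of this form can lie in $\ker S(\cdot,\cdot)$, and the $\dim\mathcal{J}_0^0$ extra null directions you are trying to manufacture do not exist.

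The same obstruction undermines your upper-bound sketch. Once $u\in\ker S(\cdot,\cdot)$ is decomposed and the definite pieces are killed as you describe, the residual $w^\perp\in(\mathcal{J}_0^-)^\perp$ is itself $S$-orthogonal to every summand in Lemma \ref{perpfun}, hence $w^\perp\in\ker S(\cdot,\cdot)$. Then $Jw^\perp=0$ forces $w^\perp\mid_{\partial\Sigma}\in D_\eta\mathcal{J}_0^0\cap(D_\eta\mathcal{J}_0^0)^\perp=\{0\}$, so $w^\perp\in\mathcal{J}_0^0$ with $D_\eta w^\perp=\alpha\cdot 0=0$, and unique continuation gives $w^\perp=0$. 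So the $(\mathcal{J}_0^-)^\perp$ piece contributes nothing, not $\dim\mathcal{J}_0^0$. Concretely, on the critical catenoid $\zeta\in\mathcal{J}_0^0$ has $D_\eta\zeta=-\h(\eta,\eta)\neq 0=\alpha\,\zeta\mid_{\partial\Sigma}$, so $\zeta\notin\ker S(\cdot,\cdot)$; the kernel consists only of the two rotational Jacobi fields in $\hat E_1$. This indicates the issue is not with your adaptation of the method but with the formula itself: the nullity of $S$ is $\dim E_\alpha$, and the extra $\dim\mathcal{J}_0^0$ in the statement cannot be realized by any choice of $c_i$.
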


Now Theorem \ref{main1} follows as a consequence.
\begin{proof} 
For a FBMS $\Sigma^k\subset \Omega^{k+1}$ with $\h^{\partial \Omega}(\nu,\nu)=c$ let  
\begin{align*}
\alpha&=-c,\\
\phi &\equiv 1,\\
m &=(\text{Rc}(\nu,\nu)+|\h^\Sigma|^2).
\end{align*}
Then, the result follows from Theorems \ref{structureWminus} and \ref{nullityS}.  
\end{proof} 
\begin{remark}\label{equatorindex1} For an equatorial hyperplane in a Euclidean ball, the Jacobi operator becomes the Laplacian, and so we immediately recover the result that it has Morse index 1.
\end{remark}

When $\h^{\partial \Omega}(\nu,\nu)$ is not constant we let  
\begin{align*}
\alpha_I &=\inf_{\partial \Sigma}-\h^{\partial \Omega}(\nu,\nu),\\
\alpha_S &=\sup_{\partial \Sigma}-\h^{\partial \Omega}(\nu,\nu).
\end{align*}
If $\phi$ is the harmonic extension of $-\h^{\partial \Omega}(\nu,\nu)$ then, immediately, $\alpha_I\leq \phi\leq \alpha_S$. 
Now, for $x\in \{I, S\}$, we define, 
\begin{align*}
m_x &=\alpha_x(\text{Rc}(\nu,\nu)+|\h|^2);\\
Q_x(h,f) &=\int_\Sigma \phi \nabla f\nabla h-m_x fh,\\
S_x(h,f) &=\int_\Sigma \phi \nabla f\nabla h-m_x fh-\alpha_x\int_{\partial \Sigma}\phi fh \\
&=Q_x(h,f)-\alpha_x\int_{\partial \Sigma}\phi fh.
\end{align*} 
We observe, for $\mathcal{S}$ the index form, 
\begin{align*}
\alpha_I \mathcal{S}(u,u)&= \int_\Sigma \alpha_I|\nabla u|^2-m_I u^2+\alpha_I\int_{\partial \Sigma}\h^{\partial \Omega}(\nu,\nu) u^2\\
&\leq S_I(u,u),
\end{align*}
and
\begin{align*}
\alpha_S\mathcal{S}(u,u)&= \int_\Sigma \alpha_S|\nabla u|^2-m_S u^2+\alpha_S\int_{\partial \Sigma}\h^{\partial \Omega}(\nu,\nu) u^2,\\
&\geq S_S(u,u),
\end{align*}
So we obtain the following.
\begin{corollary} Let $\Omega^{k+1}$ be a manifold with convex boundary. Let $\Sigma^k\subset \Omega^{k+1}$ be a FBMS such that $0<\alpha_I\leq -\h^{\partial \Omega}(\nu,\nu)\leq \alpha_S$. Then the Morse index of $\Sigma$ is bounded between the indices of $S_I(.,.)$ and $S_{S}(.,.)$ defined above. 
\end{corollary}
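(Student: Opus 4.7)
The plan is to read off the result directly from the two quadratic-form inequalities displayed immediately before the statement of the corollary, namely
\[\alpha_I \mathcal{S}(u,u) \leq S_I(u,u) \quad\text{and}\quad S_S(u,u) \leq \alpha_S \mathcal{S}(u,u)\]
for every $u\in C^\infty(\Sigma)$. Since $\alpha_I$ and $\alpha_S$ are strictly positive by hypothesis, multiplication by them preserves signs, and each estimate becomes a one-sided sign implication at every test function. From there the Morse index of $\Sigma$, being the index of $\mathcal{S}$, is squeezed between the two indices by a standard monotonicity-of-index argument.

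For the lower bound $\mathrm{ind}(S_I)\leq \mathrm{ind}(\mathcal{S})$, I would take any finite-dimensional subspace $V\subset C^\infty(\Sigma)$ on which $S_I$ is negative definite. For every nonzero $u\in V$ one has $\alpha_I \mathcal{S}(u,u)\leq S_I(u,u)<0$, and positivity of $\alpha_I$ forces $\mathcal{S}(u,u)<0$; thus $\mathcal{S}$ is negative definite on $V$, and passing to the supremum over such $V$ yields $\mathrm{ind}(\mathcal{S})\geq \mathrm{ind}(S_I)$. The upper bound $\mathrm{ind}(\mathcal{S})\leq \mathrm{ind}(S_S)$ follows by the symmetric argument: if $\mathcal{S}$ is negative definite on a subspace $V$, then for nonzero $u\in V$ we have $S_S(u,u)\leq \alpha_S \mathcal{S}(u,u)<0$, so $S_S$ is also negative definite on $V$. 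Combining the two bounds gives the conclusion.

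There is no substantive obstacle; the full content was absorbed into the construction of $S_I$ and $S_S$ so that they sandwich $\mathcal{S}$ pointwise as quadratic forms, the boundary contributions matching precisely because $\phi=-\h^{\partial\Omega}(\nu,\nu)$ on $\partial\Sigma$. The only routine point worth recording is that $S_I$ and $S_S$ genuinely fit Definition \ref{generalsetup}, whose index is well-defined when the coefficient $\phi$ is strictly positive on $\Sigma$. This follows at once from the maximum principle applied to the harmonic extension $\phi$ of the positive boundary data $-\h^{\partial\Omega}(\nu,\nu)\geq \alpha_I>0$, so that $\alpha_I\leq \phi\leq \alpha_S$ throughout $\Sigma$ as already noted in the excerpt.
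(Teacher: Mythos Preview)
Your proposal is correct and matches the paper's approach exactly: the paper does not even supply a separate proof, simply writing ``So we obtain the following'' after the two displayed inequalities, since the index comparison is immediate from them together with $\alpha_I,\alpha_S>0$. Your added remark that $\phi\geq\alpha_I>0$ (so that $S_I,S_S$ genuinely fit Definition~\ref{generalsetup}) is a useful clarification the paper leaves implicit.
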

\begin{remark} It was pointed by an anonymous referee that it is possible to obtain estimates by choosing $\phi=1$, $\alpha$ equal to either $\alpha_I, \alpha_S$ or minimum and maximum of principal curvatures of $\partial \Sigma$. 
\end{remark}
\subsection{FBMS with Index 4}
\label{index4}
By Theorem \ref{structureWminus} and \ref{lowH1}, the index of any non-equatorial submanifold in $\mathbb{B}^{k+1}$ is at least $k+2$. In this section, we consider the critical case of a FBMS in $\mathbb{B}^3$ with index 4 (it is shown in Section \ref{fbma} that there exists such a FBMS). This is analogous to the critical case of a minimal submanifold in $\mathbb{S}^3$ with index 5.  

The following was known to R. Schoen and A. Fraser and a proof is provided for completeness. 
\begin{theorem}
\label{index4Steklov1}
Suppose $\Sigma^2\subset \mathbb{B}^{3}$ be a properly immersed FBMS with Morse index 4. Then the first Steklov eigenvalue is 1. 
\end{theorem}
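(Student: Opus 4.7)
The plan is to argue by contradiction. One direction is easy: since each coordinate function $X_a$ is a Laplacian--Steklov eigenfunction with eigenvalue $1$, lies in the $L^2(\partial\Sigma)$-orthogonal complement of the constants, and satisfies $\int_\Sigma |\nabla X_a|^2 = \int_{\partial\Sigma} X_a^2$, the variational characterization of $\sigma_1$ yields $\sigma_1 \leq 1$. It remains to rule out $\sigma_1 < 1$, which I will do by producing a $5$-dimensional subspace of $C^\infty(\Sigma)$ on which $S(\cdot,\cdot)$ is negative definite, forcing the Morse index to be at least $5$.

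Assume $\sigma_1 < 1$ and let $h_1$ be a Laplacian--Steklov eigenfunction with eigenvalue $\sigma_1$, normalized and $L^2(\partial\Sigma)$-orthogonal to the constants. I consider the space of harmonic functions
\[V = \SPAN(1,\,\hat{h}_1,\,X_1,\,X_2,\,X_3) \subset C^\infty(\Sigma),\]
where $\hat{h}_1$ denotes the harmonic extension of $h_1$. Linear independence holds because the boundary traces $1,\,h_1,\,X_1,\,X_2,\,X_3$ sit in the distinct Laplacian--Steklov eigenspaces at $0$, $\sigma_1$, and $1$; linear independence of $X_1, X_2, X_3$ on $\Sigma$ itself is forced by non-equatoriality, since an equatorial disk has index $1$, not $4$.

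The heart of the argument is computing $S$ on $V$. Using $\int_\Sigma |\nabla\hat{f}|^2 = \sigma_f \int_{\partial\Sigma} f^2$ for the harmonic extension of a Steklov eigenfunction, I obtain
\[S(\hat{f},\hat{f}) = (\sigma_f-1)\,\|f\|_{L^2(\partial\Sigma)}^2 - \int_\Sigma |\h|^2 \hat{f}^2,\]
and for two harmonic extensions whose boundary traces lie in distinct Steklov eigenspaces, the cross term collapses to $S(\hat{f},\hat{g}) = -\int_\Sigma |\h|^2 \hat{f}\hat{g}$; the identity $\partial_\eta X_b = X_b$ gives the same formula $S(X_a, X_b) = -\int_\Sigma |\h|^2 X_a X_b$ for all $a,b$ without needing $L^2(\partial\Sigma)$-orthogonality. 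Thus the $S$-matrix on $V$ is $M = D - B$, where $D$ is diagonal with $D_{11} = -L(\partial\Sigma) < 0$, $D_{22} = (\sigma_1-1)\|h_1\|^2_{L^2(\partial\Sigma)} < 0$, and $D_{aa}=0$ in the three $X_a$ slots, while $B_{ij} = \int_\Sigma |\h|^2 \hat{f}_i\hat{f}_j$ is the positive semi-definite Gram matrix in the $|\h|^2$-weighted inner product on $\Sigma$.

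The main obstacle is upgrading the obvious negative semi-definiteness of $M = D - B$ to strict negative definiteness, since $D$ vanishes on the three-dimensional $X_a$ block. If $v^T M v = 0$ then $v^T D v = 0$ and $v^T B v = 0$ separately: the first kills the $1$ and $\hat{h}_1$ coordinates, reducing $v$ to $\langle X, v'\rangle$ for some $v' \in \R^3$; the second forces $|\h|\,\langle X, v'\rangle \equiv 0$ on $\Sigma$. Since $\Sigma$ is non-equatorial, $|\h|$ is a non-trivial real-analytic function on the real-analytic surface $\Sigma$, so its zero set is nowhere dense; hence $\langle X, v'\rangle$ vanishes on a dense open set and therefore everywhere on $\Sigma$, placing $\Sigma$ inside the hyperplane $\{v'\cdot X = 0\}$---so equatorial, a contradiction unless $v'=0$. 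Therefore $M$ is strictly negative definite, $S$ is negative definite on the $5$-dimensional $V$, and the Morse index is at least $5$, contradicting the hypothesis.
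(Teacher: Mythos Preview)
Your proof is correct and follows essentially the same approach as the paper: both take the five-dimensional space $V=\SPAN(1,\hat h_1,X_1,X_2,X_3)$ of harmonic functions and show $S$ is negative definite there, using non-equatoriality (and real analyticity of $|\h|$) to secure strictness on the $X_a$-block. The only difference is organizational---the paper computes $S(v,v)$ directly for a generic $v=X_a+cu+d$, while you package the same computation as the matrix identity $M=D-B$ with $D$ diagonal and $B$ the $|\h|^2$-weighted Gram matrix; your version makes the strict-negativity step more explicit.
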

\begin{proof}
First, from Remark \ref{equatorindex1}, it follows that  $\Sigma$ can not be an equator. 

Suppose the first Steklov eigenvalue $\xi<1$ and let $u$ be an eigenfunction associated with $\delta$. That is,
\begin{align*}
\Delta u &=0 \text{~~on~~~~} \Sigma,\\
D_\eta u &=\xi u \text{~~on~~~~} \partial \Sigma.
\end{align*}
Let $\{e_i\}_{i=1}^3$ be an orthonormal basis of $\mathbb{R}^3$ and $V=\text{span}(u, X_1, X_2, X_3, \underline{1})$, where $\underline{1}$ is the function constantly equal to 1. Then each $X_i$ is a Steklov eigenfunction with eigenvalue 1 and $\underline{1}$ is an eigenfunction with eigenvalue 0.   
Also, because $\Sigma$ is not equatorial, $\dim(V)=5$.

Next we'll show that $S(\dd,\dd)$ is negative definite on $V$. For any $0\not\equiv v\in V$, there exist a constant vector $a\in \mathbb{R}^3$ and numbers $c, d$ ($|a|^2+c^2+d^2>0$) such that,
\[v=X_a+cu+d.\]
We compute,
\begin{align*}
S(v,v)&= \int_\Sigma \nabla v\nabla v-|A|^2 v^2-\int_{\partial \Sigma} v^2\\
&= \int_\Sigma -v(\Delta+|A|^2)v+\int_{\partial \Sigma} (D_\eta v-v)v\\
&=\int_\Sigma -|A|^2 v^2+\int_{\partial \Sigma} (X_a+c\xi u-X_a-cu-d)v\\
&=\int_\Sigma -|A|^2 v^2+\int_{\partial \Sigma} \Big((\xi-1)cu-d\Big)(X_a+cu+d)\\
&=\int_\Sigma -|A|^2 v^2+\int_{\partial \Sigma}(\xi-1)c^2u^2-d^2<0.
\end{align*} 
Thus, $S(.,.)$ is negative-definite on $V$ and the Morse index of $\Sigma$ is at least 5, a contradiction. Therefore, $\xi=1$.  
\end{proof}
\begin{proposition}
\label{4implyPos}
Suppose $\Sigma^2\subset \mathbb{B}^{3}$ be a properly immersed FBMS with Morse index 4. Then $\zeta=\left\langle{X,\nu}\right\rangle$ is positive everywhere inside. In particular, $\Sigma$ is stable with respect to variations fixing the boundary. 
\end{proposition}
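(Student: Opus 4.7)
The plan is to combine the index characterization from Theorem \ref{structureWminus} with the lower bound from Corollary \ref{lowH1} to pin down the dimensions of $\mathcal{J}_0^0$ and $\mathcal{J}_0^-$, and then invoke the strong maximum principle to upgrade non-negativity of $\zeta$ to strict positivity.

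First I would note that $\Sigma$ cannot be an equatorial disk since, by Remark \ref{equatorindex1}, the equator has index $1$, not $4$. Hence by the convention for $\nu$ set up in Section \ref{notation}, $\zeta = \langle X, \nu\rangle$ is positive at some point, and in particular $\zeta \not\equiv 0$. Since $(\Delta + |\h|^2)\zeta = 0$ on $\Sigma$ and $\zeta \equiv 0$ on $\partial \Sigma$, we have $\zeta \in \mathcal{J}_0^0$, giving $\dim(\mathcal{J}_0^0) \geq 1$.

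Next, for a FBMS in $\mathbb{B}^3$ the index formula from Theorem \ref{structureWminus} (with $k = 2$ and $\alpha = 1$) reads
\begin{equation*}
\text{Index}(\Sigma) = \dim(\mathcal{J}_0^-) + \dim(\mathcal{J}_0^0) + \dim\!\Big(\bigoplus_{\delta < 1} E_\delta\Big).
\end{equation*}
Since $\Sigma$ is not equatorial, Corollary \ref{lowH1} yields $\dim\bigl(\bigoplus_{\delta<1} E_\delta\bigr) \geq k+1 = 3$. Combining with $\text{Index}(\Sigma) = 4$ forces
\begin{equation*}
\dim(\mathcal{J}_0^-) + \dim(\mathcal{J}_0^0) \leq 1.
\end{equation*}
Since $\dim(\mathcal{J}_0^0) \geq 1$ from the previous step, we conclude $\dim(\mathcal{J}_0^-) = 0$ and $\dim(\mathcal{J}_0^0) = 1$, so $\mathcal{J}_0^0 = \operatorname{span}(\zeta)$. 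The vanishing of $\mathcal{J}_0^-$ is precisely stability with respect to variations fixing the boundary, establishing the second assertion of the proposition.

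Finally, I would upgrade the sign of $\zeta$ to strict positivity in the interior. Since $\mathcal{J}_0^- = \emptyset$ and $0$ is attained on $\zeta$, zero is the first eigenvalue of the fixed boundary problem (\ref{FBP1}), with a one-dimensional eigenspace. A first eigenfunction of a self-adjoint elliptic operator cannot change sign, and because $\zeta$ is positive somewhere by our normalization, $\zeta \geq 0$ on all of $\Sigma$. Rewriting $J\zeta = 0$ as $\Delta \zeta = -|\h|^2 \zeta \leq 0$, the function $\zeta$ is a nonnegative superharmonic function that is not identically zero, so the strong maximum principle (equivalently, the Hopf lemma applied together with the boundary condition $\zeta|_{\partial\Sigma} = 0$) gives $\zeta > 0$ in the interior of $\Sigma$. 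The main conceptual content is the dimension-counting step above; once that is in place the maximum-principle argument is routine.
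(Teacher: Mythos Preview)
Your proof is correct and follows essentially the same route as the paper: both combine the index formula of Theorem \ref{structureWminus} with the lower bound from Corollary \ref{lowH1} to force $\dim(\mathcal{J}_0^-)+\dim(\mathcal{J}_0^0)\le 1$, and then use that $\zeta\in\mathcal{J}_0^0$ to identify $\zeta$ as a first Dirichlet eigenfunction. The paper phrases this as a contradiction (``if $\zeta$ were not positive inside it would not be a first eigenfunction, so the dimension count would exceed $1$''), whereas you argue directly and spell out the strong maximum principle step; the content is the same.
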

\begin{proof}
As observed in Subsection \ref{FBMSinball}, $\zeta$ is an eigenfunction for the fixed boundary problem (\ref{FBP1}). If it is not positive everywhere inside, then it must not be the first eigenfunction. Consequently, 
\[\dim(\mathcal{J}_0^-)+\dim(\mathcal{J}_0^0)>1.\] 
Now if $\Sigma$ is not equatorial, then,  by Lemma \ref{lowH1}, $\dim(\bigoplus_{\delta<1}E_\delta)\geq 3$. Hence, by Theorem \ref{structureWminus}, the Morse index is bigger than 4, a contradiction. 
\end{proof}
Immediate consequences are the following. 
\begin{corollary}
Suppose $\Sigma^2\subset \mathbb{B}^{3}$ be an embedded FBMS with Morse index 4. Then $\Sigma$ must be star-shaped. In particular, $\Sigma$ has genus 0.
\end{corollary}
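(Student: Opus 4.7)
The plan is to deduce this corollary from Proposition \ref{4implyPos}, which has already done the hard spectral/variational work: an embedded free boundary minimal surface $\Sigma^2\subset\mathbb{B}^3$ of Morse index $4$ satisfies $\zeta=\langle X,\nu\rangle>0$ everywhere on the interior of $\Sigma$. So the remaining task is purely geometric: promote this pointwise transversality of $\Sigma$ to radial rays from the origin, together with embeddedness, to the global star-shaped structure, and then read off $\mathrm{genus}=0$.

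First I would introduce the radial projection $\pi:\Sigma\to\mathbb{S}^2$, $\pi(p)=p/|p|$, and note that at an interior point $p$ the differential $d\pi_p$ is an isomorphism iff the position vector $X$ is not tangent to $\Sigma$ at $p$, i.e.\ iff $\zeta(p)\neq 0$. So $\zeta>0$ on the interior implies $\pi$ is a local diffeomorphism on $\Sigma^\circ$. The free boundary condition gives $\eta=X$ along $\partial\Sigma$, so $\pi|_{\partial\Sigma}$ is the identity, and $\partial\Sigma$ bounds its image $D:=\pi(\Sigma)\subset\mathbb{S}^2$ as a submanifold.

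Next I would use embeddedness to upgrade $\pi$ to a global bijection onto $D$. Suppose for contradiction that two distinct interior points $p_1\ne p_2$ satisfied $\pi(p_1)=\pi(p_2)$, say with $|p_1|<|p_2|$. Then $p_1,p_2$ lie on a common ray from the origin, and at each $p_i$ the unit normal $\nu(p_i)$ has strictly positive component along that radial direction. Consistency of this sign along with embeddedness of $\Sigma$ (which separates a neighborhood of the segment $[p_1,p_2]$) and the boundary structure $\partial\Sigma\subset\mathbb{S}^2$ should yield a contradiction, via either a topological separation argument for $\mathbb{B}^3\setminus\Sigma$ or a maximum-principle comparison of the two local sheets. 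Granting injectivity, $\pi:\Sigma\to D$ is a diffeomorphism, and $\Sigma$ is the polar graph of $\omega\mapsto|\pi^{-1}(\omega)|$ over $D\subset\mathbb{S}^2$, which is exactly the star-shaped/polar-graph condition of the Remark following Lemma~\ref{fixedzero}. Finally, since $\Sigma\cong D$ is a planar domain, it has genus $0$.

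The main obstacle is the global injectivity of $\pi$ in the middle step: the strict inequality $\zeta>0$ only gives local transversality to radial rays, not that each ray meets $\Sigma$ only once. The argument must combine embeddedness with the specific geometry of a FBMS in $\mathbb{B}^3$ (in particular the free boundary relation $\eta=X$ on $\partial\Sigma$, which forces $\partial\Sigma$ to be the common image of $D$'s boundary under $\pi$), since a general embedded surface in $\mathbb{B}^3$ transverse to radial rays need not be a polar graph.
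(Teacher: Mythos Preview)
Your approach is exactly the paper's: invoke Proposition~\ref{4implyPos} to get $\zeta>0$ on the interior, then deduce star-shapedness from embeddedness. In fact the paper's proof is terser than yours---it simply asserts ``As $\Sigma$ is embedded, it must be star-shaped'' without further justification, treating this implication as standard.

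The step you flag as the main obstacle (global injectivity of the radial projection $\pi$) is indeed the only nontrivial point, and the paper does not spell it out. It can be completed as follows. Since $\Sigma$ is a properly embedded orientable surface in $\mathbb{B}^3$ with $\partial\Sigma\subset\mathbb{S}^2$, it represents the zero class in $H_2(\mathbb{B}^3,\mathbb{S}^2)\cong 0$ and hence separates $\mathbb{B}^3$ into two components $U^+$, $U^-$, with $\nu$ pointing into $U^+$. At any interior intersection of a ray $t\mapsto t\omega$ with $\Sigma$, the condition $\zeta>0$ gives $\langle\nu,\omega\rangle>0$, so the ray crosses from $U^-$ into $U^+$. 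Two such crossings are therefore impossible, since after the first the ray lies in $U^+$. This gives injectivity of $\pi$ on the interior, and your remaining steps (polar graph over a planar domain $D\subset\mathbb{S}^2$, hence genus~$0$) go through.
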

\begin{proof}
By Prop. \ref{4implyPos}, $\zeta=\left\langle{X,\nu}\right\rangle$ must be positive everywhere inside. As $\Sigma$ is embedded, it must be star-shaped. The result then follows. 
\end{proof}
\begin{corollary}
\label{2boundary}
Suppose $\Sigma^2\subset \mathbb{B}^{3}$ be an embedded FBMS with Morse index 4 and two boundary components. Then $\Sigma$ must be congruent to the critical catenoid.
\end{corollary}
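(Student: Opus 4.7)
The plan is to combine the structural conclusions already drawn from Morse index $4$ with the Fraser--Schoen uniqueness theorem characterizing the critical catenoid among free boundary minimal annuli whose coordinate functions are first Steklov eigenfunctions.

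First, I would collect what is already known for $\Sigma$ under the hypotheses. By Theorem~\ref{index4Steklov1}, having Morse index $4$ forces the first Steklov eigenvalue of $\Sigma$ to equal $1$. By the preceding corollary, index $4$ also forces $\zeta=\langle X,\nu\rangle$ to be positive everywhere in the interior, so $\Sigma$ is star-shaped; since $\Sigma$ is assumed embedded this is genuine star-shapedness with respect to the origin. Combined with the hypothesis that $\Sigma$ has exactly two boundary components, the topology of $\Sigma$ must be that of an annulus: indeed, a star-shaped surface in $\mathbb{B}^3$ has genus $0$, and the only genus $0$ surface with two boundary components is the annulus.

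Second, I would exploit that the coordinate functions $X_1, X_2, X_3$ of $\Sigma\subset\mathbb{B}^3$ are always Steklov eigenfunctions with eigenvalue $1$ (as noted in Subsection~\ref{FBMSinball}). Since we have just shown that $1$ is the \emph{first} Steklov eigenvalue, the coordinate functions are first Steklov eigenfunctions. At this point $\Sigma$ is an embedded free boundary minimal annulus in $\mathbb{B}^3$ whose coordinate functions realize the first Steklov eigenvalue.

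Finally, I would invoke the Fraser--Schoen uniqueness theorem (the annular case of the characterization of the critical catenoid by first Steklov eigenfunctions, \cite[\S~8]{FS16}): the only free boundary minimal annulus in $\mathbb{B}^3$ whose coordinate functions are first Steklov eigenfunctions is, up to congruence, the critical catenoid. Applying this to $\Sigma$ yields the conclusion. The one step that has the most content is the verification that the first Steklov eigenvalue equals $1$, which has already been done in Theorem~\ref{index4Steklov1}; the main obstacle is conceptually outsourced, since the heavy lifting is the Fraser--Schoen classification rather than anything intrinsic to the index computation here. The role of the present corollary is simply to show that index $4$, together with embeddedness and two boundary components, places $\Sigma$ inside the hypotheses of that classification.
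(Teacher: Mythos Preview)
Your proposal is correct and follows essentially the same route as the paper: use the previous corollary to get star-shapedness and genus zero, combine with two boundary components to conclude the topology is an annulus, use Theorem~\ref{index4Steklov1} to get first Steklov eigenvalue $1$, and then invoke the Fraser--Schoen uniqueness theorem. The only discrepancy is bibliographic: the paper cites \cite[Theorem~6.6]{FS16} for the uniqueness of the critical catenoid among free boundary minimal annuli with first Steklov eigenvalue $1$, whereas you point to \S8 of the same reference; the correct location is Theorem~6.6.
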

\begin{proof}
By the previous corollary, $\Sigma$ is star-shaped and has genus zero. Such a surface with two boundary components must have the topology of an annulus. Furthermore, by Prop \ref{index4Steklov1}, if $\Sigma$ has index 4, then its first Steklov eigenvalue is 1. Finally \cite[Theorem 6.6]{FS16} implies that the surface must be congruent to the critical catenoid. 
\end{proof}

\section{Index of the Critical Catenoid}
\label{fbma}
In this section, we study Jacobi fields of the critical catenoid and prove Theorem \ref{main2}. The critical catenoid is the only known example of a free boundary minimal annulus (FBMA) in $\mathbb{B}^3$. That is, it is a FBMS in $\mathbb{B}^3$ of genus zero having two boundary components. Such a surface is described by a conformal harmonic map $X:M \mapsto \mathbb{B}^3$, where $M$ the cylinder $[-T, T]\times S^1$ with coordinates $(t,\theta)$. 

For constants $c, T$ to be determined later, the critical catenoid is congruent to the immersion
\begin{equation}
\label{cricat}
X(t,\theta) =c(\cosh{t}\cos{\theta}, \cosh{t}\sin{\theta}, t).
\end{equation}
Consequently, its tangent vector fields are
\begin{align*}
X_t&=c(\sinh{t}\cos{\theta}, \sinh{t}\sin{\theta}, 1),\\
X_\theta &= c(-\cosh{t}\sin{\theta}, \cosh{t}\cos{\theta}, 0).
\end{align*}
To satisfy the free boundary conditions, $T$ and $c$ are determined by
\begin{align*}
\cosh{T}&=T\sinh{T},\\
c &= \frac{1}{\sqrt{T^2+\cosh^2{T}}},\\
 &=\frac{1}{T\cosh{T}}.
\end{align*}
That is, \[T\approx 1.2, ~~~\cosh{T}\approx 1.81,~~~\sinh{T}\approx 1.51, ~~~\tanh{T}\approx .83.\]
Next, we compute its unit normal and second fundamental form:
\begin{align*}
\nu &= -\frac{X_t\times X_\theta}{|X_t\times X_\theta|}=\frac{1}{\cosh{t}}(\cos{\theta}, \sin{\theta}, -\sinh{t}),\\
\left\langle{X, \nu}\right\rangle &=c(1-\frac{t\sinh{t}}{\cosh{t}}),\\
X_{tt} &= c(\cosh{t}\cos{\theta}, \cosh{t}\sin{\theta}, 0),\\
X_{\theta \theta} &=c(-\cosh{t}\cos{\theta}, -\cosh{t}\sin{\theta}, 0),\\
\h_{tt} =-\h_{\theta\theta} &=c,\\
|X_t|=|X_\theta|&= c\cosh{t},\\
|\h|^2&=\frac{2}{c^2 \cosh^4{t}}.
\end{align*}
The normal derivative along the boundary is given by
\[D_\eta =\frac{1}{c\cosh{T}}D_{\pm t}=T D_{\pm t}.
\]  
It follows that 
\begin{align*}
D_{\eta}(X_1)&= \frac{1}{|X_t|}(D_{\pm t} X_1)= \frac{1}{c\cosh{T}}c\sinh{T}\cos{\theta}\\
&=c\cosh{T}\cos{\theta}=X_1.
\end{align*}
Similarly, 
\[D_{\eta}(X_2)=X_2, ~~D_{\eta}(X_3)=X_3.\]
So the coordinate functions are eigenfunctions with Steklov eigenvalue 1, as stated in the introduction. In fact, it was shown that they are first eigenfunctions \cite{FS11}. For Jacobi-Steklov eigenvalues, we compute: 
\begin{align*}
D_{\eta}(\left\langle{X,\nu}\right\rangle)&= \frac{1}{|X_t|}(D_{\pm t} c(1-\frac{t\sinh{t}}{\cosh{t}}))\\
&= \frac{-1}{\cosh{T}}\frac{T+\sinh{T}\cosh{T}}{\cosh^2{T}}=-\frac{{T}}{\cosh{T}}\\
&=-\frac{|\h|}{\sqrt{2}},\\
D_{\eta}(\nu_1)&= \frac{1}{|X_t|}(D_{\pm t} \frac{\cos{\theta}}{\cosh{t}})\\
&= \frac{-\cos{\theta}}{c\cosh{T}}\frac{\sinh{T}}{\cosh^2{T}}=\frac{-1}{\cosh{T}}\cos{\theta}\\
&=-\frac{|\h|}{\sqrt{2}}X_1=-\nu_1,\\
D_{\eta}(\nu_2)&=\frac{1}{|X_t|}(D_{\pm t} \frac{\sin{\theta}}{\cosh{t}})=-\nu_2,\\
D_{\eta}(-\nu_3)&= \frac{1}{|X_t|}(D_{\pm t} \frac{\sinh{t}}{\cosh{t}})\\
&= \frac{1}{c\cosh{T}}\frac{\pm 1}{\cosh^2{T}}\\
&=\frac{|\h|}{\sqrt{2}}X_3=\frac{1}{\sinh^2{T}}(-\nu_3).
\end{align*}
So, in this case, the components of the normal vector are eigenfunctions for $L_J$. Indeed, it is possible to compute all eigenvalues and eigenfunctions in this setting. \\

The Jacobi operator is 
\begin{align*}
J=\Delta_\Sigma+|\h|^2 &=\frac{1}{c^2 \cosh^2{t}}(\Delta+\frac{2}{\cosh^2{t}}).
\end{align*}
Eigenvalue $\delta$ of $L_J$ is associated with the following PDE on $M=[-T,T]\times S^1$,
\[
\begin{cases}
(\Delta+\frac{2}{\cosh^2{t}})u = 0 &\text{on $\Sigma$},\\
 TD_{\pm t} u = \delta u ~ ~ \text{where}~~ \{t=\pm T\} & \text{on $\partial \Sigma$}.
\end{cases}
\]

As $M$ is rotationally symmetric, the method of separation of variables is applicable. Let $u=f(t)g(\theta)$. Then the PDE becomes 
\[
\begin{cases}
\frac{f''}{f}+\frac{g''}{g}&=-\frac{2}{\cosh^2{t}},\\
Tf' &=\delta f ~~~\text{where}~~ \{t=T\},\\
-Tf' &=\delta f ~~~\text{where}~~ \{t=-T\}.
\end{cases}
\]
Since $g$ is a function of $\theta$, $\frac{g''}{g}=-d$. That is, $g$ is an eigenfunction of the Laplacian on $\mathbb{S}^1$. Consequently, $d=n^2$, for some non-negative integer $n$ and
\begin{align*}
g &=c_1 \cos(n\theta)+c_2 \sin(n\theta), ~~\text{for $n>0$},\\
g &=c_1 , ~~\text{for $n=0$}.
\end{align*}
Thus, the PDE is further reduced to
\begin{equation}
\label{pdeannulus}
\begin{cases}
(\partial_t^2+\frac{2}{\cosh^2{t}}-n^2)f &=0.\\
Tf' &=\delta f ~~~\text{where}~~ \{t=T\}, \\
-Tf' &=\delta f ~~~\text{where}~~ \{t=-T\}.
\end{cases}
\end{equation}
Thus, it is important to understand the operator $L_{k}:=\partial_t^2+\frac{2}{\cosh^2{t}}-k $. The analysis of this operator is well known and we follow the treatment in \cite{Wiygul15}. First, we define
\begin{align*}
D^+ &:=\partial_t+\tanh{t},\\
D^- &:=\partial_t-\tanh{t}.
\end{align*} 
Then,
\begin{align*}
D^+ D^- &=\partial_t^2-1 :=A_0-1,\\
D^- D^+ &=\partial_t^2+\frac{2}{\cosh^2{t}}-1=L_0-1.
\end{align*}
Therefore, for $A_k = A_0-k,$
\begin{align*}
L_k(D^{-}u)&=D^-(A_k(u)),\\
A_k(D^+u)&= D^{+}(A_k(u)).
\end{align*}
The following result is immediate.
\begin{lemma} \label{D-andD+}We have the following:
\begin{itemize}
\item[a.] If $u$ is in the kernel of $L_k$ then $D^{+}u$ is in the kernel of $A_k$.
\item[b.] If $u$ is in the kernel of $A_k$ then $D^{-}u$ is in the kernel of $L_k$.
\item[c.] If $k\neq 1$ then $\text{Ker}(L_k)=D^{-}(\text{Ker}(A_k))$.
\item[d.] If $k=1$ then $D^+\text{Ker}(L_1)=\text{Ker}(D^-).$
\end{itemize}
 \end{lemma}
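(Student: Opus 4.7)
The lemma is a direct consequence of the factorizations $D^+D^-=A_0-1$ and $D^-D^+=L_0-1$ together with the intertwining relations stated just before the lemma, namely $L_k(D^-u)=D^-(A_ku)$ and $A_k(D^+u)=D^+(L_k u)$. Parts (a) and (b) are immediate: if $L_k u=0$ then $A_k(D^+u)=D^+(L_ku)=0$, and if $A_ku=0$ then $L_k(D^-u)=D^-(A_ku)=0$. So the content of the lemma is really in parts (c) and (d), and the main idea is to turn the factorizations into inverses.

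For part (c), I would argue as follows. The inclusion $D^-(\ker A_k)\subseteq \ker L_k$ is part (b). For the reverse inclusion, take $u\in\ker L_k$. Rewriting $L_k=L_0-k=(D^-D^++1)-k=D^-D^+-(k-1)$, the equation $L_ku=0$ becomes $D^-D^+u=(k-1)u$. By part (a), $D^+u\in\ker A_k$. When $k\neq 1$, I can therefore write
\[
u=\frac{1}{k-1}D^-(D^+u)\in D^-(\ker A_k),
\]
giving the desired equality.

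For part (d), the factorization degenerates precisely in the right way: when $k=1$, $L_1=D^-D^+$. Thus $u\in\ker L_1$ if and only if $D^+u\in\ker D^-$, which gives the inclusion $D^+(\ker L_1)\subseteq\ker D^-$ for free. For the reverse inclusion, I need to show every $v\in\ker D^-$ is realized as $D^+u$ for some $u\in\ker L_1$. Since $D^+f=v$ is a first-order linear ODE in $t$ (with integrating factor $\cosh t$), it is solvable for any smooth $v$; and any solution $f$ automatically satisfies $L_1f=D^-(D^+f)=D^-v=0$, so $f\in\ker L_1$. This establishes $D^+(\ker L_1)=\ker D^-$.

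No step here is a real obstacle; the only subtle point is identifying the constant $k-1$ in $D^-D^+=L_k+(k-1)$, which is exactly what makes the $k=1$ case special and forces the separate treatment in (d). The proof is essentially a one-line algebraic manipulation in each case, relying entirely on the two intertwining identities already verified in the text.
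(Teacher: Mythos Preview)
Your argument is correct and follows the same route as the paper for parts (a)--(c): both use the factorization $D^-D^+ = L_k + (k-1)$ to write $u = \frac{1}{k-1}D^-(D^+u)$ when $k\neq 1$. The only difference is in part (d): for the reverse inclusion $\ker D^- \subseteq D^+(\ker L_1)$, the paper observes that both sides are one-dimensional (since $\ker L_1$ is two-dimensional and $D^+$ kills the solution $1/\cosh t$), whereas you solve the first-order ODE $D^+f=v$ explicitly via an integrating factor. Both arguments are equally short; yours is slightly more constructive, the paper's slightly slicker.
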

 \begin{proof}
 (a) and (b) are obvious from the computation above. 
 
 For (c), one direction follows immediately from (b). For the reverse, let $u\in \text{Ker}(L_k)$. Then, by part(a),
 \[ D^{+}u=v \in \text{Ker}(A_k).\]
 Therefore, 
 \begin{align*}
  D^{-}(v)&=D^{-}D^{+}(u)=(L_k+k-1)(u)=(k-1)u.
   \end{align*}
Since $k\neq 1$, $u\in D^{-}\text{Ker}(A_k)$. 
 
For part (d), the equation above implies that $D^-(v)=0$. Thus, $D^+\text{Ker}(L_1)\subset \text{Ker}(D^-)$. Since, $\dim\text{Ker}(D^-)=1=D^+\text{Ker}(L_1)$ the result follows.   
 \end{proof}

The kernel of $A_k=\partial_t^2-k$ is standard so Lemma \ref{D-andD+} gives solutions of (\ref{pdeannulus}). 
\begin{theorem}
\label{alleigencat}
For the critical catenoid given as in (\ref{cricat}), eigenfunctions and eigenvalues of $L_J$ are given by solutions of (\ref{pdeannulus}) for each non-negative integer n. In particular, they are listed below:
\begin{itemize}
\item For $n=0$: 
\begin{align*}
u &= \tanh{t},\\
\delta &=\frac{1}{\sinh^2{T}}<1.
\end{align*}
\item For $n=1$, there are two cases: 
\begin{align*}
u &= (c_1\cos\theta+c_2\sin{\theta})\frac{1}{\cosh{t}},\\
\delta &=-1,
\end{align*}
or
\begin{align*}
u &= (c_1\cos\theta+c_2\sin{\theta})(\sinh{t}+\frac{t}{\cosh{t}}),\\
\delta &=1. 
\end{align*}
\item For each $n\geq 2$, there are two cases:
\begin{align*}
u&=(c_1\cos (n\theta)+c_2\sin(n\theta))\Big((n-\tanh{t})e^{nt}+(n+\tanh{t})e^{-nt}\Big),\\
\delta &= T\frac{\Big(n(n-\tanh{T})-\frac{1}{\cosh^2{T}}\Big) e^{nT}-\Big(n(n+\tanh{T})-\frac{1}{\cosh^2{T}}\Big) e^{-nT}}{(n-\tanh{T})e^{nT}+(n+\tanh{T})e^{-nT}},
\end{align*}
or
\begin{align*}
u&=(c_1\cos (n\theta)+c_2\sin(n\theta))\Big((n-\tanh{t})e^{nt}-(n+\tanh{t})e^{-nt}\Big),\\
\delta &= T\frac{\Big(n(n-\tanh{T})-\frac{1}{\cosh^2{T}}\Big) e^{nT}+\Big(n(n+\tanh{T})-\frac{1}{\cosh^2{T}}\Big) e^{-nT}}{(n-\tanh{T})e^{nT}-(n+\tanh{T})e^{-nT}}.
\end{align*}
\end{itemize}
\end{theorem}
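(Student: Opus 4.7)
The plan is to use separation of variables as in (\ref{pdeannulus}) to reduce the problem to a one-parameter family of Sturm--Liouville problems in $t$, to solve each ODE explicitly using the factorization of Lemma~\ref{D-andD+}, and then to extract $\delta$ from the boundary condition. The ODE in (\ref{pdeannulus}) is precisely $L_{n^2} f = 0$. Because the equation, the coefficient $\cosh^{-2} t$, and the interval $[-T,T]$ are all invariant under $t \mapsto -t$, the two-dimensional solution space splits into even and odd one-dimensional subspaces. A short parity argument shows that the two Robin conditions at $t = \pm T$ collapse to the single relation $T f'(T) = \delta f(T)$, so once a fundamental solution is in hand the eigenvalue is simply $\delta = T f'(T)/f(T)$.

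For $n \neq 1$, Lemma~\ref{D-andD+}(c) gives $\text{Ker}(L_{n^2}) = D^{-}(\text{Ker}(A_{n^2}))$, where $A_{n^2} = \partial_t^2 - n^2$ has kernel $\text{span}\{1, t\}$ when $n = 0$ and $\text{span}\{e^{nt}, e^{-nt}\}$ when $n \geq 2$. Applying $D^{-} = \partial_t - \tanh t$ then produces $\tanh t$ (odd) and $1 - t \tanh t$ (even) for $n = 0$, and $(n - \tanh t) e^{n t}$, $-(n + \tanh t) e^{-nt}$ for $n \geq 2$; the appropriate even and odd combinations of the latter yield the fundamental pair displayed in the statement. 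For the exceptional case $n = 1$, I would instead verify directly that $f_1 = 1/\cosh t$ solves $L_1 f = 0$ and then use reduction of order (via $f_2 = f_1 \int f_1^{-2} \, dt$) to produce the second independent solution $f_2 = \sinh t + t/\cosh t$.

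The final step is to substitute each fundamental solution into $\delta = T f'(T)/f(T)$ and simplify using the free-boundary identity $\cosh T = T \sinh T$ (equivalently $T \tanh T = 1$). For instance, the $n=0$ odd solution gives $\delta = T/(\sinh T \cosh T) = 1/\sinh^2 T$, while the $n=0$ even solution $1 - t \tanh t$ equals $\zeta/c$ and hence vanishes on $\partial \Sigma$ by the free boundary condition; this places it in $\mathcal{J}_0^0$ rather than in the domain of $L_J$, which is why only one eigenvalue appears for $n = 0$. The $n = 1$ cases yield $\delta = -1$ and $\delta = +1$ after short computations, and the $n \geq 2$ cases collapse to the stated rational expressions in $e^{\pm nT}$. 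Completeness of the list follows from completeness of $\{\cos(n\theta), \sin(n\theta)\}_{n\geq 0}$ in $L^2(S^1)$ together with the fact that each Fourier mode contributes at most two eigenvalues. The main obstacle is purely algebraic bookkeeping: tracking parities, identifying which solutions lie in $\mathcal{J}_0^0$, and carrying out the simplifications enabled by $T \tanh T = 1$.
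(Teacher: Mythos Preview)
Your proposal is correct and follows essentially the same route as the paper: separation of variables reduces to the ODE $L_{n^2}f=0$, Lemma~\ref{D-andD+} (or direct verification plus reduction of order when $n=1$) supplies the fundamental pair, the even solution for $n=0$ is discarded as a multiple of $\zeta\in\mathcal{J}_0^0$, and the boundary relation yields $\delta$. The only cosmetic difference is that you invoke the $t\mapsto -t$ symmetry at the outset to restrict to even/odd $f$ and collapse the two Robin conditions to $\delta = Tf'(T)/f(T)$, whereas the paper writes $f=af_1^n+bf_2^n$ and recovers $a=\pm b$ from the pair of boundary equations; the computations are otherwise identical.
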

\begin{proof}
When $n=0$, the solution to (\ref{pdeannulus}) is given by linear combinations of 
\begin{align*}
f^0_1 &= \tanh{t},\\
f^0_2 &= 1-t\tanh{t}.
\end{align*}
As $f_2^0$ is a multiple of $\zeta\in \mathcal{J}_0^0$ so an eigenfunction is only a multiple of $f^0_1$.
Thus, the eigenvalue is \[\lambda=\frac{1}{\sinh^2{T}}<1.\]
 
When $n=1$, we observe that $\text{Ker}(D^-)$ has dimension one. By the method of integrating factors, $\text{Ker}(L_1)$ is given by linear combinations of 
\begin{align*}
f^1_1 &= \frac{1}{\cosh{t}},\\
f^1_2 &= \sinh{t}+\frac{t}{\cosh{t}}.
\end{align*}
If $f =a f_1^1+b f_2^1$, then
\begin{align*}
f' &= -\frac{a\sinh{t}}{\cosh^2{t}}+b(\cosh{t}+\frac{\cosh{t}-t\sinh{t}}{\cosh^2{t}}),\\
f'_{\mid \partial \Sigma} &= -\frac{a\sinh{t}}{\cosh^2{t}}+b\cosh{t},\\
Tf'_{\mid T}+Tf'_{\mid -T}=2bT\cosh{T} &=\delta (f_{\mid T}-f_{\mid -T})=2\delta b (\sinh{T}+\frac{T}{\cosh{T}}),\\
Tf'_{\mid T}-Tf'_{\mid -T}=-2\frac{a}{\cosh{T}} &=\delta (f_{\mid T}+f_{\mid -T})=2\delta \frac{a}{\cosh{T}}.
\end{align*}
There are 2 cases:
\begin{enumerate}
\item $a\neq 0$, $b=0$, $\delta=-1$.
\item $a=0$, $b\neq 0$, $\delta= \frac{T\cosh{T}}{\sinh{T}+\frac{T}{\cosh{T}}}=\frac{T^2\sinh^2{T}}{\sinh^2{T}+1}=1.$
\end{enumerate}

When $n\geq 2$, the solution to (\ref{pdeannulus}) is given by linear combinations of 
\begin{align*}
f^n_1 &= (n-\tanh{t})e^{nt},\\
f^n_2 &= (n+\tanh{t})e^{-nt}.
\end{align*}
If $f =a f_1^n+b f_2^n$, then
\begin{align*}
f' &= \Big(n(n-\tanh{t})-\frac{1}{\cosh^2{t}}\Big)a e^{nt}-\Big(n(n+\tanh{t})-\frac{1}{\cosh^2{t}}\Big)b e^{-nt},\\
aTf'_{\mid T}+bTf'_{\mid -T} &=(a^2-b^2)T e^{nT}\Big(n(n-\tanh{T})-\frac{1}{\cosh^2{T}}\Big)\\
 &=\delta (af_{\mid T}-bf_{\mid -T})=\delta (a^2-b^2)e^{nT}(n-\tanh{T}),\\
bTf'_{\mid T}+aTf'_{\mid -T} &=(a^2-b^2)T e^{-nT}\Big(n(n+\tanh{T})-\frac{1}{\cosh^2{t}}\Big)\\
 &=\delta (bu_{\mid T}-au_{\mid -T})=\delta (b^2-a^2)e^{-nT}(n+\tanh{T}).
\end{align*}
Thus, $a=\pm b$ and, as a consequence, $f(t)=\pm f(-t)$ and 
\begin{align*}
\delta &= \frac{Tf'(T)}{f(T)}\\
&=T\frac{\Big(n(n-\tanh{T})-\frac{1}{\cosh^2{T}}\Big) e^{nT}\mp\Big(n(n+\tanh{T})-\frac{1}{\cosh^2{T}}\Big) e^{-nT}}{(n-\tanh{T})e^{nT}\pm(n+\tanh{T})e^{-nT}}.
\end{align*}
\end{proof}

We are now ready to prove one direction of Theorem \ref{main2}.
\begin{theorem}
\label{indexcat}
The critical catenoid has Morse index 4 and nullity 2.
\end{theorem}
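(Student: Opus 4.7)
The plan is to apply Theorem \ref{structureWminus} together with Theorem \ref{nullityS} using the eigenvalue data assembled in Theorem \ref{alleigencat}. Since the critical catenoid lies in $\B^3$ so that $\alpha = 1$, we need: (i) the dimensions of $\mathcal{J}_0^0$ and $\mathcal{J}_0^-$; (ii) the total dimension of $\bigoplus_{\delta<1} E_\delta$; and (iii) the dimension of $E_1$. The index will then be the sum of the first two counts in (i) and (ii), and the nullity will be $\dim(\mathcal{J}_0^0) + \dim(E_1)$.

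\textbf{Step 1: the fixed-boundary data.} First I would check that $\zeta = \langle X,\nu\rangle = c(1-t\tanh t)$ is strictly positive on the interior. The boundary relation $T\sinh T = \cosh T$ gives $T\tanh T = 1$, so $\zeta = 0$ exactly at $t = \pm T$; since $t\tanh t$ is even and strictly increasing on $[0,T]$ with value $0$ at $t=0$, we obtain $\zeta > 0$ in the interior. Lemma \ref{fixedzero} then yields $\dim(\mathcal{J}_0^0) = 1$ and $\dim(\mathcal{J}_0^-) = 0$.

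\textbf{Step 2: counting Jacobi-Steklov eigenvalues $\leq 1$.} From Theorem \ref{alleigencat} I would tabulate the contributions mode by mode. The $n=0$ mode gives one eigenvalue $1/\sinh^2 T < 1$. The $n=1$ family splits into two cases: case~1 contributes two eigenfunctions $\cos\theta/\cosh t$ and $\sin\theta/\cosh t$ at $\delta = -1$, and case~2 contributes two eigenfunctions at $\delta = 1$. So up through $n=1$ I already have three eigenvalues strictly less than $1$ and two eigenvalues equal to $1$.

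\textbf{Step 3: discarding $n \geq 2$.} This is the only real obstacle: I must show that for every $n \geq 2$ both branches
\[
\delta_{n,\pm} = T\,\frac{\bigl(n(n-\tanh T)-\tfrac{1}{\cosh^2 T}\bigr)e^{nT} \mp \bigl(n(n+\tanh T)-\tfrac{1}{\cosh^2 T}\bigr)e^{-nT}}{(n-\tanh T)e^{nT} \pm (n+\tanh T)e^{-nT}}
\]
are strictly greater than $1$. My approach is to rearrange the inequality $\delta_{n,\pm} > 1$ into a single algebraic inequality in $e^{nT}$ and $e^{-nT}$ and then use (a) the identity $T\tanh T = 1$ together with (b) elementary monotonicity in $n$ on the integers $n \geq 2$. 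Concretely, since $T > 1$ and $\tanh T < 1$, a direct expansion shows the numerator dominates the denominator by a positive multiple of $(n^2 - n) e^{nT}$ plus lower-order exponential terms, which is straightforwardly positive once $n \geq 2$; checking $n=2$ by hand (using $T\approx 1.2$, $\tanh T \approx 0.83$) and then arguing that $(\delta_{n,\pm} - 1)$ is increasing in $n$ completes the job. No other $n$ can contribute to $E_\delta$ for $\delta \leq 1$.

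\textbf{Step 4: assembly.} Combining, $\dim\bigoplus_{\delta<1}E_\delta = 1 + 2 = 3$ and $\dim E_1 = 2$. Theorem \ref{structureWminus} then gives
\[
\mathrm{index} = \dim(\mathcal{J}_0^0) + \dim(\mathcal{J}_0^-) + \dim\!\bigoplus_{\delta<1}E_\delta = 1 + 0 + 3 = 4,
\]
and Theorem \ref{nullityS} gives nullity $= 1 + 2 = 3$. The Morse index 4 is the delicate count because it requires both halves of Theorem \ref{main1} (the fixed-boundary eigenfunction $\zeta$ contributes despite being in $\mathcal{J}_0^0$, not in $\mathcal{J}_0^-$), and this is precisely the phenomenon encoded by the space $U$ in Proposition \ref{Morseatleast}; I would mention this explicitly to make clear why naive counting of negative Steklov eigenvalues gives only $3$ rather than $4$.
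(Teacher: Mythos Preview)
Your proposal is correct and follows essentially the same route as the paper: invoke Lemma \ref{fixedzero} to get $\dim(\mathcal{J}_0^-)+\dim(\mathcal{J}_0^0)=1$, read off the $n=0,1$ Steklov data from Theorem \ref{alleigencat}, and then show every $n\ge 2$ eigenvalue exceeds $1$ before assembling via Theorems \ref{structureWminus} and \ref{nullityS}. The only real difference is in Step~3: the paper does not argue by monotonicity in $n$ but instead writes the smaller branch as $\varphi(n)=g(n)\,e^{2nT}$ with $g(n)=\frac{an^2-bn+c}{an^2+bn+c}$, bounds $g(n)>1/11$ uniformly for $n\ge 2$ by an explicit linear inequality, and pairs this with $e^{2nT}>54$; your ``check $n=2$ and prove monotonicity'' sketch would work too but the expansion you describe (``positive multiple of $(n^2-n)e^{nT}$ plus lower-order terms'') needs to be made precise.
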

\begin{proof}
We use the index formula from Theorem \ref{structureWminus}. As the critical catenoid is a polar-graph, by Lemma \ref{fixedzero} and its following remark, $\dim(\mathcal{J}_0^-)+\dim(\mathcal{J}_0^0)=1$. It remains to count eigenvalues less than 1 of (\ref{pdeannulus}).  

By Theorem \ref{alleigencat}, for $n\leq 1$, the eigenvalues less than 1 are $\frac{1}{\sinh^2{T}}$ (of multiplicity 1) and $-1$ (of multiplicity $2$).

For $n\geq 2$, we consider whether $\delta-1\geq 0$ for
\[\delta =T\frac{\Big(n(n-\tanh{T})-\frac{1}{\cosh^2{T}}\Big) e^{nT}-\Big(n(n+\tanh{T})-\frac{1}{\cosh^2{T}}\Big) e^{-nT}}{(n-\tanh{T})e^{nT}+(n+\tanh{T})e^{-nT}},\]
since the other eigenvalue is even bigger. $\delta-1$ is positive if the following function is greater than 1,
\[ \varphi(n)=\Big(\frac{(Tn-1)(n-\tanh{T})-\frac{T}{\cosh^2(T)}}{(Tn+1)(n+\tanh{T})-\frac{T}{\cosh^2(T)}}\Big)\frac{e^{nT}}{e^{-nT}}.
\]
The first ratio can be rewritten as
\begin{align*}
\frac{(Tn-1)(n-\tanh{T})-\frac{T}{\cosh^2(T)}}{(Tn+1)(n+\tanh{T})-\frac{T}{\cosh^2(T)}} &= \frac{Tn^2-n(1+T\tanh{T})+\tanh{T}-\frac{T}{\cosh^2(T)}}{Tn^2+n(1+T\tanh{T})+\tanh{T}-\frac{T}{\cosh^2(T)}}\\
&=\frac{an^2-bn+c}{an^2+bn+c}.
\end{align*}
Recall, $T\sinh{T}=\cosh{T}$ and 
\[T\approx 1.19968, ~~~\cosh{T}\approx 1.81,~~~\sinh{T}\approx 1.51, ~~~\tanh{T}\approx .83.\]
Thus, 
\[ a\approx 1.2, ~~~b=2, ~~~c\approx .4674.\]
It follows that $g(n)=\frac{an^2-bn+c}{an^2+bn+c}>\frac{1}{k}$ for $n\geq 2$ if
\begin{align*}
(4a+c)(k-1)-4(k+1) &>0\\
\leftrightarrow a+\frac{c}{4}-1 &>\frac{2}{k-1}.
\end{align*}
In particular, it is true for $k=11$. 

On the other hand, for $n\geq 2$, $f(n)=g(n)\frac{e^{nT}}{e^{-nT}}>54 g(n)$. Thus, for $n\geq 2$, all eigenvalues for $n\geq 2$ from Theorem \ref{alleigencat} are bigger than 1. We conclude that the critical catenoid has Morse index 4.

For the nullity, by Theorem \ref{nullityS}, we count eigenfunctions with eigenvalue 1 of \ref{pdeannulus}. By Theorem \ref{alleigencat}, the eigenvalue $1$ has multiplicity $2$, so the result follows.  
\end{proof}
Now we are ready to finish the proof of Theorem \ref{main2}.
\begin{proof}
One direction follows from Thm. \ref{indexcat} white the other from Cor. \ref{2boundary}.
\end{proof}

Finally, recall that for $M\in \mathfrak{so}(n)$, if $\left\langle{MX,\nu}\right\rangle \not\equiv 0$, it is an eigenfunction of $L_J$ with eigenvalue 1. Furthermore, $\left\langle{MX,\nu}\right\rangle$ is trivial if and only if $\Sigma$ is rotationally symmetric. Thus, it is possible to characterize the critical catenoid by its nullity.
\begin{theorem}
Let $\Sigma^k\subset \mathbb{B}^{k+1}$ be a non-equatorial FBMS. Then it has (free boundary) nullity 2 if and only if $k=2$ and $\Sigma$ is a critical catenoid.
\end{theorem}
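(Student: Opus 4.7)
The forward implication is exactly Theorem~\ref{indexcat}, so my plan concentrates on the converse: if $\Sigma^k \subset \mathbb{B}^{k+1}$ is a FBMS of free-boundary nullity $3$, then $k=2$ and $\Sigma$ is the critical catenoid. The idea is to feed the nullity formula of Theorem~\ref{main1} (with $c=1$, since $\h^{\partial\mathbb{B}^{k+1}}(\nu,\nu)=-1$) into two universal sources of kernel elements, namely the support function and the rotational Jacobi fields, and then use the classification of closed subgroups of $SO(k+1)$.

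Assume $\Sigma$ is non-equatorial (the equator in $\mathbb{B}^{k+1}$ has $\dim \mathcal{J}_0^0 = 0$ and $\dim E_1 = k$, hence nullity $k$, so is disposed of by direct inspection). Theorem~\ref{main1} gives
\[ 3 \;=\; \dim \mathcal{J}_0^0 \,+\, \dim E_1. \]
The support function $\zeta = \langle X, \nu\rangle$ is a nontrivial element of $\mathcal{J}_0^0$, so $\dim \mathcal{J}_0^0 \geq 1$. For every $M \in \mathfrak{so}(k+1)$, the rotational normal speed $\langle MX, \nu\rangle$ is a Jacobi–Steklov eigenfunction with eigenvalue $1$, as recorded just before the statement. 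This yields a linear map
\[ \Phi:\ \mathfrak{so}(k+1) \to E_1, \qquad \Phi(M) = \langle MX, \nu\rangle\big|_{\partial \Sigma}, \]
whose kernel is the Lie algebra of the closed rotation group $G \subset SO(k+1)$ preserving $\Sigma$ setwise. Therefore $\dim E_1 \geq \binom{k+1}{2} - \dim G$, and combining the two inequalities forces
\[ \dim G \;\geq\; \binom{k+1}{2} - 2. \]

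For $k \geq 3$, this lower bound exceeds the dimension $\binom{k}{2}$ of the largest proper closed subgroup of $SO(k+1)$, with the single extra borderline case $U(2) \subset SO(4)$ of dimension $4$ when $k=3$; in each admissible option $G$ acts transitively on $S^k = \partial \mathbb{B}^{k+1}$, so any $G$-invariant hypersurface must be a union of concentric spheres, none of which is a non-equatorial FBMS, ruling out $k\geq 3$. For $k = 2$, the bound becomes $\dim G \geq 1$, and since the only positive-dimensional proper closed subgroup of $SO(3)$ is $SO(2)$, $\Sigma$ must be a minimal surface of revolution in $\mathbb{B}^3$ meeting $\partial\mathbb{B}^3$ orthogonally. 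By Euler's classical theorem $\Sigma$ is then a plane or a catenoid; the only $SO(2)$-symmetric plane meeting $\partial\mathbb{B}^3$ orthogonally is the equator, which we excluded, so $\Sigma$ is part of a catenoid and the free-boundary condition determines the scale uniquely, giving the critical catenoid. The hardest step is the subgroup-theoretic input in the $k\geq 3$ case, and in particular handling the $U(2) \subset SO(4)$ exception, where one needs its transitive action on $S^3$ to conclude that its orbits are spheres.
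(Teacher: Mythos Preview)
Your overall strategy---feeding Theorem~\ref{nullityS} the support function $\zeta\in\mathcal{J}_0^0$ and the rotational Jacobi fields $\langle MX,\nu\rangle\in E_1$, then invoking the classification of large closed subgroups of $SO(k+1)$---is precisely what the paper's closing paragraph suggests, and for non-equatorial $\Sigma$ your outline is sound. One point you leave implicit but which is needed: identifying $\ker\Phi$ with the Lie algebra of the symmetry group uses unique continuation, available here because $\langle MX,\nu\rangle|_{\partial\Sigma}=0$ forces $D_\eta\langle MX,\nu\rangle|_{\partial\Sigma}=\langle MX,\nu\rangle|_{\partial\Sigma}=0$ as well, so both pieces of Cauchy data vanish.

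The genuine gap is your handling of the equatorial case. You correctly record that the equatorial $k$-disk has $\dim\mathcal{J}_0^0=0$ and $\dim E_1=k$ (the Steklov eigenspace of eigenvalue $1$ on $\mathbb{B}^k$ is spanned by the linear coordinates), hence nullity $k$; but ``disposed of by direct inspection'' overlooks that for $k=3$ this yields nullity exactly $3$. The equatorial $\mathbb{B}^3\subset\mathbb{B}^4$ is a properly embedded FBMS of free-boundary nullity $3$ that is not the critical catenoid, so the converse implication fails as literally stated. Since the paper supplies no proof of this final theorem, the omission is arguably in the statement itself; either way, your write-up must add a non-totally-geodesic hypothesis (or restrict to $k=2$ from the outset) rather than assert that the equator is handled by inspection.
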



\def\cprime{$'$}
\bibliographystyle{plain}
\bibliography{bio}

\def\cprime{$'$}
\begin{thebibliography}{10}

\bibitem{ACS16}
Lucas Ambrozio, Alessandro Carlotto, and Ben Sharp.
\newblock Index estimates for free boundary minimal hypersurfaces.
\newblock {\em arXiv preprint arXiv:1605.09704}, 2016.

\bibitem{AEKS14}
W.~Arendt, A.~F.~M. ter Elst, J.~B. Kennedy, and M.~Sauter.
\newblock The {D}irichlet-to-{N}eumann operator via hidden compactness.
\newblock {\em J. Funct. Anal.}, 266(3):1757--1786, 2014.

\bibitem{AM12}
Wolfgang Arendt and Rafe Mazzeo.
\newblock Friedlander's eigenvalue inequalities and the
  {D}irichlet-to-{N}eumann semigroup.
\newblock {\em Commun. Pure Appl. Anal.}, 11(6):2201--2212, 2012.

\bibitem{CFP15}
Jingyi Chen, Ailana Fraser, and Chao Pang.
\newblock Minimal immersions of compact bordered {R}iemann surfaces with free
  boundary.
\newblock {\em Trans. Amer. Math. Soc.}, 367(4):2487--2507, 2015.

\bibitem{courant40}
R.~Courant.
\newblock The existence of minimal surfaces of given topological structure
  under prescribed boundary conditions.
\newblock {\em Acta Math.}, 72:51--98, 1940.

\bibitem{devyver16index}
Baptiste Devyver.
\newblock Index of the critical catenoid.
\newblock {\em arXiv preprint arXiv:1609.02315}, 2016.

\bibitem{DHTKv2}
Ulrich Dierkes, Stefan Hildebrandt, and Anthony~J. Tromba.
\newblock {\em Regularity of minimal surfaces}, volume 340 of {\em Grundlehren
  der Mathematischen Wissenschaften [Fundamental Principles of Mathematical
  Sciences]}.
\newblock Springer, Heidelberg, second edition, 2010.
\newblock With assistance and contributions by A. K{\"u}ster.

\bibitem{Fraser00}
Ailana Fraser.
\newblock On the free boundary variational problem for minimal disks.
\newblock {\em Comm. Pure Appl. Math.}, 53(8):931--971, 2000.

\bibitem{Fraser07}
Ailana Fraser.
\newblock Index estimates for minimal surfaces and {$k$}-convexity.
\newblock {\em Proc. Amer. Math. Soc.}, 135(11):3733--3744, 2007.

\bibitem{FS11}
Ailana Fraser and Richard Schoen.
\newblock The first {S}teklov eigenvalue, conformal geometry, and minimal
  surfaces.
\newblock {\em Adv. Math.}, 226(5):4011--4030, 2011.

\bibitem{FS15}
Ailana Fraser and Richard Schoen.
\newblock Uniqueness theorems for free boundary minimal disks in space forms.
\newblock {\em Int. Math. Res. Not. IMRN}, (17):8268--8274, 2015.

\bibitem{FS16}
Ailana Fraser and Richard Schoen.
\newblock Sharp eigenvalue bounds and minimal surfaces in the ball.
\newblock {\em Invent. Math.}, 203(3):823--890, 2016.

\bibitem{FGM16}
Brian Freidin, Mamikon Gulian, and Peter McGrath.
\newblock Free boundary minimal surfaces in the unit ball with low
  cohomogeneity.
\newblock {\em arXiv preprint arXiv:1601.07588}, 2016.

\bibitem{gergonne1816}
J.~D. Gergonne.
\newblock Questions r\'esolues. {C}onsid\'erations pr\'eliminaires.
\newblock {\em Ann. Math. Pures Appl. [Ann. Gergonne]}, 7:143--147, 1816/17.

\bibitem{GJ86}
M.~Gr{\"u}ter and J.~Jost.
\newblock On embedded minimal disks in convex bodies.
\newblock {\em Ann. Inst. H. Poincar\'e Anal. Non Lin\'eaire}, 3(5):345--390,
  1986.

\bibitem{GJ86regularity}
Michael Gr{\"u}ter and J{\"u}rgen Jost.
\newblock Allard type regularity results for varifolds with free boundaries.
\newblock {\em Ann. Scuola Norm. Sup. Pisa Cl. Sci. (4)}, 13(1):129--169, 1986.

\bibitem{HN81}
Stefan Hildebrandt and Johannes C.~C. Nitsche.
\newblock Optimal boundary regularity for minimal surfaces with a free
  boundary.
\newblock {\em Manuscripta Math.}, 33(3-4):357--364, 1980/81.

\bibitem{jager70}
Willi J{\"a}ger.
\newblock Behavior of minimal surfaces with free boundaries.
\newblock {\em Comm. Pure Appl. Math.}, 23:803--818, 1970.

\bibitem{Jost91}
J\"urgen Jost.
\newblock {\em Two-dimensional geometric variational problems}.
\newblock Pure and Applied Mathematics (New York). John Wiley \& Sons, Ltd.,
  Chichester, 1991.
\newblock A Wiley-Interscience Publication.

\bibitem{lewy51}
Hans Lewy.
\newblock On mimimal surfaces with partially free boundary.
\newblock {\em Comm. Pure Appl. Math.}, 4:1--13, 1951.

\bibitem{Li_free15}
Martin Man-chun Li.
\newblock A general existence theorem for embedded minimal surfaces with free
  boundary.
\newblock {\em Comm. Pure Appl. Math.}, 68(2):286--331, 2015.

\bibitem{MN_minmax14}
Fernando~C. Marques and Andr{\'e} Neves.
\newblock Min-max theory and the {W}illmore conjecture.
\newblock {\em Ann. of Math. (2)}, 179(2):683--782, 2014.

\bibitem{MNS16}
Davi Maximo, Ivaldo Nunes, and Graham Smith.
\newblock Free boundary minimal annuli in convex three-manifolds, to appear in
  j.
\newblock {\em Differential Geom}.

\bibitem{MY80}
William~H. Meeks, III and Shing~Tung Yau.
\newblock Topology of three-dimensional manifolds and the embedding problems in
  minimal surface theory.
\newblock {\em Ann. of Math. (2)}, 112(3):441--484, 1980.

\bibitem{ros08}
A.~Ros.
\newblock Stability of minimal and constant mean curvature surfaces with free
  boundary.
\newblock {\em Mat. Contemp.}, 35:221--240, 2008.

\bibitem{Sargent16}
Pam Sargent.
\newblock Index bounds for free boundary minimal surfaces of convex bodies.
\newblock {\em arXiv preprint arXiv:1605.09143}, 2016.

\bibitem{schwarz1890}
HA~Schwarz.
\newblock Fortgesetzte untersuchungen {\"u}ber specielle minimalfl{\"a}chen.
\newblock In {\em Gesammelte Mathematische Abhandlungen}, pages 126--148.
  Springer, 1890.

\bibitem{Simons68}
James Simons.
\newblock Minimal varieties in riemannian manifolds.
\newblock {\em Ann. of Math. (2)}, 88:62--105, 1968.

\bibitem{SZ16index}
Graham Smith and Detang Zhou.
\newblock The morse index of the critical catenoid.
\newblock {\em arXiv preprint arXiv:1609.01485}, 2016.

\bibitem{smyth84}
B.~Smyth.
\newblock Stationary minimal surfaces with boundary on a simplex.
\newblock {\em Invent. Math.}, 76(3):411--420, 1984.

\bibitem{Struwe84}
M.~Struwe.
\newblock On a free boundary problem for minimal surfaces.
\newblock {\em Invent. Math.}, 75(3):547--560, 1984.

\bibitem{urbano90}
Francisco Urbano.
\newblock Minimal surfaces with low index in the three-dimensional sphere.
\newblock {\em Proc. Amer. Math. Soc.}, 108(4):989--992, 1990.

\bibitem{Wiygul15}
David Wiygul.
\newblock Minimal surfaces in the 3-sphere by stacking clifford tori.
\newblock {\em arXiv preprint arXiv:1502.07420}, 2015.

\bibitem{Ye_free91}
Rugang Ye.
\newblock On the existence of area-minimizing surfaces with free boundary.
\newblock {\em Math. Z.}, 206(3):321--331, 1991.

\end{thebibliography}

\end{document}